\newtheorem{theorem}{Theorem}[section]
\newtheorem{lemma}[theorem]{Lemma}
\newtheorem{corollary}[theorem]{Corollary}
\newtheorem{proposition}[theorem]{Proposition}
\newtheorem*{thmA}{Theorem A}
\newtheorem*{thmB}{Theorem B}
\numberwithin{equation}{section}
\long\def\symbolfootnote[#1]#2{\begingroup%
\def\thefootnote{\fnsymbol{footnote}}\footnote[#1]{#2}\endgroup}
\begin{document}

\def\C{{\mathbb C}}
\def\N{{\mathbb N}}
\def\Z{{\mathbb Z}}
\def\R{{\mathbb R}}
\def\K{{\mathbb K}}
\def\T{{\mathbb T}}
\def\F{{\cal F}}
\def\U{{\cal U}}
\def\M{{\cal M}}
\def\Q{{\cal Q}}
\def\H{{\cal H}}
\def\CC{{\cal C}}
\def\E{{\cal E}}
\def\rr{{\cal R}}
\def\pp{{\cal P}}
\def\square{\vrule height6pt width6pt depth 0pt}
\def\epsilon{\varepsilon}
\def\kappa{\varkappa}
\def\phi{\varphi}
\def\leq{\leqslant}
\def\geq{\geqslant}
\def\re{\text{\tt Re}\,}
\def\slim{\mathop{\hbox{$\overline{\hbox{\rm lim}}$}}\limits}
\def\ilim{\mathop{\hbox{$\underline{\hbox{\rm lim}}$}}\limits}
\def\supp{\hbox{\tt supp}\,}
\def\dim{\hbox{\tt dim}\,}
\def\ker{\hbox{\tt ker}\,}
\def\Ker{\hbox{\tt ker}^\star\,}
\def\Int{\hbox{\tt int}\,}
\def\spann{\hbox{\tt span}\,}
\def\Re{\hbox{\tt Re}\,}
\def\det{\hbox{\tt det}\,}
\def\deg{\hbox{\tt deg}\,}
\def\ssub#1#2{#1_{{}_{{\scriptstyle #2}}}}
\def\summ{\sum\limits}
\def\maxx{\max\limits}
\def\minn{\min\limits}
\def\limm{\lim\limits}
\def\ud{\overline{d}}
\def\ld{\underline{d}}
\font\Goth=eufm10 scaled 1440
\def\uu{\hbox{{\Goth u}}}
\def\bbeta{\overline{\beta}}

\title{On the spectrum of frequently hypercyclic operators}

\author{Stanislav Shkarin}

\date{}

\maketitle

\begin{abstract} A bounded linear operator $T$ on a Banach
space $X$ is called frequently hypercyclic if there exists $x\in X$
such that the lower density of the set $\{n\in\N:T^nx\in U\}$ is
positive for any non-empty open subset $U$ of $X$. Bayart and
Grivaux have raised a question whether there is a frequently
hypercyclic operator on any separable infinite dimensional Banach
space. We prove that the spectrum of a frequently hypercyclic
operator has no isolated points. It follows that there are no
frequently hypercyclic operators on all complex and on some real
hereditarily indecomposable Banach spaces, which provides a negative
answer to the above question.
\end{abstract}

\small \noindent{\bf MSC:} \ \ 47A16, 37A25

\noindent{\bf Keywords:} \ \ Frequently hypercyclic operators,
hereditarily indecomposable Banach spaces, quasinilpotent operators
\normalsize

\section{Introduction \label{s1}}\rm

All vector spaces in this article are assumed to be over the field
$\K$ being either the field $\C$ of complex numbers or the field
$\R$ of real numbers. As\symbolfootnote[0]{Partially supported by
Plan Nacional I+D+I grant no. MTM2006-09060 and Junta de
Andaluc\'{\i}a FQM-260.} usual, $\T=\{z\in\C:|z|=1\}$, $\Z$ is the
set of integers, $\Z_+$ is the set of non-negative integers and $\N$
is the set of positive integers. For a Banach space $X$, $L(X)$
stands for the algebra of bounded linear operators on $X$ and $X^*$
is the space of continuous linear functionals on $X$.

Recall that a continuous linear operator $T$ on a topological vector
space $X$ is called {\it hypercyclic} if there is $x\in X$ such that
the orbit $\{T^nx:x\in\Z_+\}$ is dense in $X$. We refer to surveys
\cite{ge1,ge2} for additional information on hypercyclicity. There
are several stronger versions of the above property of linear
operators like hereditarily hypercyclic operators, topologically
mixing operators and operators satisfying the Kitai Criterion.
Recently Bayart and Grivaux \cite{bagri} have introduced the concept
of frequently hypercyclic operators. Recall that the {\it upper} and
{\it lower} densities of a set $A\subset\Z_+$ are defined by the
formula
$$
\ld(A)=\ilim_{n\to\infty}\frac{|\{m\in A:m\leq n\}|}{n},\qquad
\ud(A)=\slim_{n\to\infty}\frac{|\{m\in A:m\leq n\}|}{n},
$$
where $|B|$ stands for the number of elements of a finite set $B$.

\medskip

\noindent{\bf Definition 1.} \ Let $T$ be a continuous linear
operator on a topological vector space $X$. Then $x\in X$ is called
a {\it frequently hypercyclic} vector for $T$ if the lower density
of the set $\{n\in\N:T^nx\in U\}$ is positive for any non-empty open
subset $U$ of $X$. An operator $T$ is called {\it frequently
hypercyclic} if it has a frequently hypercyclic vector.

We also say that $x\in X$ is an {\it $\uu$-frequently hypercyclic}
vector for $T$ if the upper density of the set $\{n\in\N:T^nx\in
U\}$ is positive for any non-empty open subset $U$ of $X$. An
operator $T$ is called {\it $\uu$-frequently hypercyclic} if it has
an $\uu$-frequently hypercyclic vector. We denote the set of
$\uu$-frequently hypercyclic vectors for $T$ by the symbol $\uu(T)$.

\medskip

Clearly frequent hypercyclicity implies $\uu$-frequent
hypercyclicity, which, in turn, implies hypercyclicity. Bayart and
Grivaux \cite{bagri} have raised the following question.

\medskip

\noindent{\bf Question 1.} \ Does every separable infinite
dimensional Banach space support a frequently hypercyclic operator?

\medskip

This question is partially motivated by the following chain of
results. Ansari \cite{ansa1} and Bernal-Gonz\'ales \cite{bernal}
showed independently that for any separable infinite dimensional
Banach space $X$ there is a hypercyclic operator $T\in L(X)$.
Grivaux \cite{gri} proved that there is a mixing operator on any
separable infinite dimensional Banach space. Finally, in \cite{shka}
it is shown that on any separable infinite dimensional Banach space,
there is a bounded linear operator, satisfying the Kitai Criterion.
In this article we show that unlike for other strong forms of
hypercyclicity, the answer to the above question is negative.

\begin{theorem}\label{main} Let $T$ be a bounded linear operator on
a separable infinite dimensional Banach space $X$ such that
$\limm_{n\to \infty}\|T^n\|^{1/n}=0$. Then $I+T$ is not
$\uu$-frequently hypercyclic. In particular, $I+T$ is not frequently
hypercyclic.
\end{theorem}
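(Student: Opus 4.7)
Suppose for contradiction that some $x\in X$ is a $\uu$-frequently hypercyclic vector for $S:=I+T$. The plan is to reduce the problem to a rigidity statement about a scalar entire function of minimal exponential type.

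Because $T$ is quasinilpotent, the series $A=\log(I+T)=\sum_{k\geq 1}(-1)^{k+1}T^{k}/k$ converges in $L(X)$ to a bounded quasinilpotent operator with $e^{A}=S$, so $S^n=e^{nA}$ for every $n\in\Z_+$. Fix any nonzero $f\in X^*$ and define $g(z)=f(e^{zA}x)=\sum_{k\geq 0}f(A^k x)\,z^k/k!$. Since $\|A^k\|^{1/k}\to 0$ the coefficients decay faster than any geometric sequence, so $g$ is entire of exponential type $0$: for every $\epsilon>0$ there is $C_\epsilon>0$ with $|g(z)|\leq C_\epsilon e^{\epsilon|z|}$. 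By construction, $g(n)=f(S^nx)$ for every $n\in\Z_+$.

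Applying the $\uu$-frequent hypercyclicity of $x$ to the non-empty open sets $\{y\in X:|f(y)|>M\}$ and $\{y\in X:|f(y)|<\delta\}$ (both non-empty since $f\neq 0$) gives
\[
\ud\{n\in\Z_+:|g(n)|>M\}>0 \ \hbox{ and }\ \ud\{n\in\Z_+:|g(n)|<\delta\}>0
\]
for every $M>0$ and every $\delta>0$. The contradiction is to be obtained via the following rigidity statement: a non-zero entire function $g$ of exponential type $0$ whose values on $\Z_+$ are smaller than $\delta$ on a set of positive upper density for every $\delta>0$ must be a polynomial. Granted this, $g$ is a polynomial, and no polynomial sequence can satisfy both density conditions (non-constant polynomials escape to infinity, forcing $\{|g(n)|<\delta\}$ to be finite; constants are bounded, so $\{|g(n)|>M\}$ is empty for large $M$) -- so $g\equiv 0$, forcing $f$ to vanish on the dense orbit of $x$, a contradiction.

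The main obstacle is the rigidity statement itself. My plan is to decompose $g=P_N+R_N$ as Taylor polynomial of degree $N$ plus remainder, and estimate $|R_N(t)|\leq C_\eta(\eta|t|)^{N+1}e^{\eta|t|}/(N+1)!$ using $|f(A^kx)|\leq C_\eta\eta^k$ (valid for every $\eta>0$). With $N$ and $\eta$ chosen together as functions of $n$, one tries to show that on a set of $n\in\Z_+$ of density $1$, $g(n)$ is approximated by a polynomial of bounded degree, and then to argue that the small-value density forces all coefficients of that polynomial to vanish. An alternative route is via a Jensen/Carleman argument: the density of near-zeros of $g$ on $\Z_+$ is linked to its zero-counting function $N(r)$, which for type-$0$ functions satisfies $N(r)=o(r)$, so requiring positive upper density of near-zeros on $\Z_+$ should force $g$ to have a zero structure incompatible with exponential type $0$ unless $g$ is itself a polynomial. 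Making either approach quantitative is, I expect, the technical heart of Shkarin's argument.
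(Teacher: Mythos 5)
Your reduction to an entire function of exponential type $0$ interpolating $f(S^nx)$ at the non-negative integers is sound, and your construction via $A=\log(I+T)$ is a legitimate Banach-space alternative to the Newton series $F(z)=f(x)+\sum_n \frac{f((T-I)^nx)}{n!}z(z-1)\cdots(z-n+1)$ that the paper uses for the same purpose. But the argument has a genuine gap at exactly the point you flag: the ``rigidity statement'' is not proved, and neither of your proposed routes to it works as stated. The Jensen/Carleman route fails because Jensen's inequality bounds the number of \emph{actual zeros} of $g$, whereas your hypothesis supplies only \emph{near-zeros} ($|g(n)|<\delta$ on a set of positive upper density, with the set allowed to depend on $\delta$); there is no general way to promote near-zeros of a type-$0$ function on $\Z_+$ to genuine zeros, and minimum-modulus theorems come with exceptional sets that destroy the density count. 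The Taylor-remainder route likewise offers no mechanism for forcing coefficients to vanish from information on a positive-upper-density subset of $\Z_+$ alone.

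The paper circumvents this by never using smallness of $|f|$ at all. It first invokes Wengenroth's theorem that a hypercyclic operator has no closed invariant subspace of finite codimension to produce $u$ with $f(u)<0$ and $f(Tu)>0$ (working with a real-linear functional, $\Re f$ in the complex case, so that the interpolating function is real on the real axis), hence an open set $U$ on which $f<0$ and $f\circ T>0$. Visiting $U$ on a set $A$ of positive upper density gives $F(n)<0<F(n+1)$ for $n\in A$, and the intermediate value theorem then yields a genuine zero $t_n\in(n,n+1)$ for each such $n$. Only now does Jensen's inequality apply: $N_F(n+1)\geq |\{m\in A:m\leq n\}|$ grows linearly along a subsequence, contradicting type $0$. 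This sign-change device, together with the invariant-hyperplane input behind it, is the missing idea in your proposal; without it, or without an honest proof of your rigidity lemma, the argument is incomplete.
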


In the case $\K=\C$, we use the above theorem to prove the following
result.

\begin{theorem} \label{spec} Let $X$ be a complex Banach space and
$T\in L(X)$ be frequently hypercyclic. Then the spectrum $\sigma(T)$
of $T$ has no isolated points.
\end{theorem}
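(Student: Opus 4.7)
The plan is to reduce, via the Riesz spectral decomposition associated with an isolated point of $\sigma(T)$, to Theorem~\ref{main}. Suppose, for contradiction, that some $\lambda_0\in\sigma(T)$ is isolated. Since $\K=\C$ and $\{\lambda_0\}$ is then a clopen subset of $\sigma(T)$, the Riesz functional calculus produces a spectral projection $P\in L(X)$ commuting with $T$ such that $X_1:=PX$ and $X_2:=(I-P)X$ are closed, $T$-invariant, $X=X_1\oplus X_2$, and the parts $T_j:=T|_{X_j}$ satisfy $\sigma(T_1)=\{\lambda_0\}$ and $\sigma(T_2)=\sigma(T)\setminus\{\lambda_0\}$.

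Next I would push frequent hypercyclicity from $T$ onto $T_1$ and identify $|\lambda_0|$. If $x$ is a frequently hypercyclic vector for $T$, then for every non-empty open $U\subseteq X_1$ the preimage $P^{-1}(U)$ is non-empty and open in $X$; since $PT^n=T_1^nP$, the set $\{n:T_1^n(Px)\in U\}$ coincides with $\{n:T^nx\in P^{-1}(U)\}$ and hence has positive lower density. Thus $Px$ is frequently hypercyclic for $T_1$. In particular $T_1$ is hypercyclic, so $X_1$ is infinite-dimensional, and it is separable as a subspace of $X$. By Kitai's theorem on spectra of hypercyclic operators, every connected component of $\sigma(T_1)$ meets $\T$; as $\sigma(T_1)=\{\lambda_0\}$ consists of a single point, this forces $\lambda_0\in\T$.

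The last step is to rotate and apply Theorem~\ref{main}. The operator $S:=\bar\lambda_0 T_1-I\in L(X_1)$ satisfies $\sigma(S)=\{0\}$, so $\|S^n\|^{1/n}\to 0$. Invoking the rotation invariance of frequent hypercyclicity (the frequent analogue of the Le\'on--M\"uller theorem: on a complex Banach space, $T_1$ frequently hypercyclic implies $\bar\lambda_0 T_1$ frequently hypercyclic), the operator $I+S=\bar\lambda_0 T_1$ is frequently hypercyclic on the separable infinite-dimensional complex Banach space $X_1$, which contradicts Theorem~\ref{main}.

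The step I expect to be most delicate is the rotation invariance. If such a statement is not available off the shelf for frequent hypercyclicity, the fallback is to strengthen Theorem~\ref{main} by allowing $\lambda I+T$ in place of $I+T$ for an arbitrary $\lambda\in\T$; since $(\lambda I+T)^n=\sum_k\binom{n}{k}\lambda^{n-k}T^k$ exhibits the same sub-exponential norm behaviour as $(I+T)^n$, whichever quantitative estimate drives the proof of Theorem~\ref{main} should transfer with only cosmetic modifications, and the present proof then goes through without ever appealing to a rotation theorem.
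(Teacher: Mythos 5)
Your overall architecture --- spectral projection at the isolated point, pushing the dynamical property to the restriction $T_1$, forcing $\lambda_0\in\T$, and reducing to a quasinilpotent perturbation of a unimodular multiple of the identity --- is exactly the paper's. The genuine gap is the bridge from $T_1$ to Theorem~\ref{main}, and neither of your two options for that step works as stated. Your primary route invokes ``the frequent analogue of the Le\'on--M\"uller theorem''; this is not available off the shelf --- the paper explicitly poses the stability of frequent hypercyclicity under unimodular rotations as an open problem (Question~2), because the pigeonhole argument that drives the proof for upper density (a finite union of sets has positive upper density only if one of them does) fails for lower density. Your fallback --- reproving Theorem~\ref{main} for $\lambda I+T$ by ``cosmetic modifications'' --- also fails: the proof of Theorem~\ref{main} is not a norm estimate on $(I+T)^n$ but an entire-function argument. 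One forms the Newton series $F(z)=f(x)+\sum_{n\geq 1}\frac{f((T-I)^nx)}{n!}\,z(z-1)\cdots(z-n+1)$ interpolating $F(k)=f(T^kx)$, and uses two facts: that $(I+T)-I=T$ is quasinilpotent, so $F$ has exponential type $0$; and that $F$ is real on the real axis, so sign changes at consecutive integers in a set of positive upper density produce too many real zeros for Jensen's inequality. Both are destroyed when $I$ is replaced by $\lambda I$ with $\lambda\in\T\setminus\{1\}$: then $(\lambda I+T)-I$ has spectral radius $|\lambda-1|\neq 0$, the series no longer has type $0$ (indeed need not converge), and $F$ is no longer real on $\R$.

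The paper's actual bridge is to work with $\uu$-frequent hypercyclicity (upper density) throughout: frequent hypercyclicity implies $\uu$-frequent hypercyclicity, and for the latter rotation invariance \emph{is} proved (Proposition~\ref{coc2}, via the sets $M^T_x$ and $N^T_x$ together with the Bourdon--Feldman and Le\'on--M\"uller theorems), which yields Theorem~\ref{main2}: $zI+S$ is not $\uu$-frequently hypercyclic when $|z|=1$ and $\|S^n\|^{1/n}\to 0$. Your argument is repaired by replacing ``frequently hypercyclic'' with ``$\uu$-frequently hypercyclic'' in the transfer to $T_1$ (the same computation with $P$ works verbatim for upper densities) and then citing Theorem~\ref{main2} instead of a rotation theorem for frequent hypercyclicity; everything else in your proposal, including the use of Kitai's theorem to place $\lambda_0$ on $\T$ (the paper argues this directly from $\|T_1^ny\|\to 0$ or $\infty$), matches the paper.
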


This result shows that the class of frequently hypercyclic operators
is significantly smaller than the class of hypercyclic operators.
For instance, in \cite{grish} it is demonstrated that if $X$ is a
separable Banach space and $M$ is the operator norm closure of the
set of finite rank nilpotent operators, then the set $\{T\in M:I+T\
\ \text{is hypercyclic}\}$ is a dense $G_\delta$ subset of $M$.
Thus, in a sense, a 'generic' operator with one-point spectrum
$\{1\}$  is hypercyclic.

We use Theorem~\ref{main} to obtain a negative answer to Question~1.
It turns out that there are no frequently hypercyclic operators on
Banach spaces with few operators. Recall that a bounded linear
operator $T$ on a Banach space $X$ is called {\it strictly singular}
if the restriction of $T$ to any closed infinite dimensional
subspace of $X$ is not an isomorphism onto its image. As is
well-known \cite{pich}, the set ${\cal S}(X)$ of strictly singular
operators is an ideal in the algebra $L(X)$.

By $\Phi_\K$ we denote the class of infinite dimensional Banach
spaces $X$ over the field $\K$ for which $L(X)/{\cal S}(X)$ is
isomorphic to $\K$. It is straightforward to see that $X\in \Phi_\K$
if and only if any $T\in L(X)$ has the shape $T=zI+S$ for $z\in\K$
and $S\in {\cal S}(X)$.

\begin{theorem} \label{hihi} Let $X\in\Phi_\K$. Then
there is no bounded $\uu$-frequently hypercyclic operator on $X$. In
particular, $X$ does not support a frequently hypercyclic operator.
\end{theorem}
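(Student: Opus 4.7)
The plan is to use the hypothesis $X\in\Phi_\K$ to write every operator as $T=zI+S$ with $S\in{\cal S}(X)$, force the spectrum of $T$ to be a single point on the unit circle by combining Riesz spectral theory of strictly singular operators with basic hypercyclicity obstructions, and finally reduce to the hypothesis of Theorem~\ref{main} via a rotate or a power of $T$.

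Fix $T\in L(X)$ and decompose $T=zI+S$ with $z\in\K$ and $S\in{\cal S}(X)$. Assume, aiming for a contradiction, that $T$ is $\uu$-frequently hypercyclic; in particular $T$ is hypercyclic, hence $X$ is separable. Since $S$ is strictly singular on an infinite-dimensional Banach space, $S$ is a Riesz operator: its spectrum is $\{0\}\cup\{\lambda_k\}$ with each nonzero $\lambda_k$ an isolated eigenvalue of finite algebraic multiplicity (and $\lambda_k\to 0$ if infinitely many). Suppose some $\lambda_k\neq 0$; then $\mu:=z+\lambda_k$ is an isolated point of $\sigma(T)$ of finite algebraic multiplicity, and the Riesz projection $P$ at $\mu$ is a nonzero finite-rank $T$-invariant projection. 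The topological decomposition $X=P(X)\oplus(I-P)(X)$ into $T$-invariant subspaces with $P(X)$ finite-dimensional and nonzero contradicts hypercyclicity: the $P(X)$-component of any hypercyclic vector of $T$ would be hypercyclic for $T|_{P(X)}$, whereas no nonzero finite-dimensional space carries a hypercyclic operator. Therefore $\sigma(S)=\{0\}$, so $S$ is quasinilpotent and $\sigma(T)=\{z\}$. The spectral property of hypercyclic operators due to Kitai (applied in the real case via complexification) asserts that every connected component of $\sigma(T)$ meets $\T$, whence $|z|=1$; in the real case this forces $z\in\{-1,1\}$.

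It remains to deduce a contradiction from the $\uu$-frequent hypercyclicity of $T=zI+S$ with $|z|=1$ and $S$ quasinilpotent. When $z=1$ the operator is already $T=I+S$ and Theorem~\ref{main} applies directly. When $z=-1$ (the only other real possibility) one forms $T^2=I+Q$ with $Q=-2S+S^2$ a polynomial in the commuting quasinilpotent $S$ without constant term, hence itself quasinilpotent; invoking the Ansari-type iteration invariance of $\uu$-frequent hypercyclicity, $T^2$ is also $\uu$-frequently hypercyclic, contradicting Theorem~\ref{main}. When $\K=\C$ and $z\neq 1$, one passes to $T/z=I+S/z$ with $S/z$ quasinilpotent and invokes the rotation invariance of $\uu$-frequent hypercyclicity (analogue of Le\'on Saavedra and M\"uller's theorem for hypercyclicity) to contradict Theorem~\ref{main} once more.

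The main obstacle is this last reduction step: the spectral analysis is essentially automatic from Riesz theory together with the obstruction to hypercyclicity on finite-dimensional summands, but passing from $T$ to $I+Q$ for arbitrary $z\in\T$ requires the Ansari-type iteration invariance and the Le\'on Saavedra--M\"uller-type rotation invariance of $\uu$-frequent hypercyclicity. These transfer principles are subtler than their hypercyclicity counterparts, and establishing or citing versions adequate for $\uu$-frequent hypercyclicity is the technical heart of the proof.
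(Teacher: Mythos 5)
Your proposal is correct and follows essentially the same route as the paper: decompose $T=zI+S$ with $S$ strictly singular, show $S$ is quasinilpotent and $|z|=1$, and reduce to Theorem~\ref{main} via the rotation and power invariance of $\uu$-frequent hypercyclicity (the paper's Proposition~\ref{coc2} and Corollary~\ref{min}, established in Section~\ref{secrot}). The only cosmetic differences lie in the spectral step: the paper rules out nonzero points of $\sigma(S)$ by noting that a normal eigenvalue of $S$ would place a point in the point spectrum of the adjoint of $T$, contradicting Wengenroth's theorem that hypercyclic operators admit no closed invariant subspaces of finite codimension, and it obtains $|z|=1$ from the elementary orbit growth $\|T^nx\|^{1/n}\to|z|$ rather than from Kitai's spectral condition, whereas you use the finite-rank Riesz projection and a finite-dimensional invariant summand --- equivalent obstructions.
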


The above theorem gives a negative answer to Question~1 provided
there are separable Banach spaces in classes $\Phi_\C$ and
$\Phi_\R$. Recall that an infinite dimensional Banach $X$ space is
called {\it hereditarily indecomposable} or an {\it HI space} if for
any infinite dimensional closed linear subspaces $L$ and $M$ of $X$
satisfying $L\cap M=\{0\}$ their sum $L+M$ is not closed. Separable
HI Banach spaces were first constructed by Gowers and Maurey
\cite{hi1}, see also the survey \cite{hi2}. One of the most
interesting features of HI spaces $X$ is \cite{hi1,fer1} that any
complex HI space belongs to $\Phi_\C$. It is also worth noting that
there are separable Banach spaces in $\Phi_\C$ that are not HI
\cite{aaa}. Anyway, we have the following corollary.

\begin{corollary}\label{hihi1} Let $X$ be a separable complex HI
space. Then there is no bounded $\uu$-frequently hypercyclic
operator on $X$. In particular, $X$ does not support a frequently
hypercyclic operator.
\end{corollary}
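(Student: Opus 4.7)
The plan is very short: this corollary is meant to be an immediate consequence of Theorem~\ref{hihi}, with the only extra input being the structural theorem that complex HI spaces lie in the class $\Phi_\C$. The excerpt flags this explicitly, attributing the fact that any complex HI space belongs to $\Phi_\C$ to Gowers--Maurey \cite{hi1} and Ferenczi \cite{fer1}. Hence there is essentially nothing to prove beyond citing and combining two results.

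First I would record the hypotheses: $X$ is a separable complex HI space. Second, I would invoke the Gowers--Maurey--Ferenczi structural theorem to conclude that $X\in\Phi_\C$, i.e.\ every $T\in L(X)$ has the form $T=zI+S$ with $z\in\C$ and $S$ strictly singular. Third, I would apply Theorem~\ref{hihi} to $X$ (which is legitimate since $X\in\Phi_\C$) to conclude that $X$ carries no $\uu$-frequently hypercyclic bounded operator, and therefore in particular no frequently hypercyclic operator (since frequent hypercyclicity implies $\uu$-frequent hypercyclicity, as noted in the paragraph after Definition~1).

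There is no genuine obstacle in this corollary --- all the substance has been pushed into Theorem~\ref{hihi} (which in turn relies on Theorem~\ref{main} applied to the strictly singular part $S$, which is automatically quasinilpotent on an HI space so that $\|S^n\|^{1/n}\to 0$) and into the cited HI structure theorem. The only care needed is to ensure that the separability assumption on $X$ is matched with the separability hypothesis of Theorem~\ref{hihi}; both are present, so the invocation is clean.
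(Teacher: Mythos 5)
Your proposal matches the paper exactly: the paper gives no separate proof of Corollary~\ref{hihi1}, deriving it immediately from Theorem~\ref{hihi} together with the cited fact (Gowers--Maurey, Ferenczi) that every complex HI space lies in $\Phi_\C$, which is precisely your argument. Your parenthetical remarks on where the real work lives (quasinilpotence of the strictly singular part, separability) are accurate and consistent with the paper's Lemmas~\ref{HI0} and~\ref{HI}.
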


The case of real HI spaces is slightly more complicated. Namely,
Ferenczi \cite{fer1,fer2} demonstrated that if $X$ is a real HI
space, then $L(X)/{\cal S}(X)$ is isomorphic to either $\R$, or $\C$
or the quaternion division ring $\mathbb H$. Moreover, in
\cite{fer2} it is shown that all three possibilities do occur on the
level of separable real HI spaces and is observed that the original
real separable HI space $X$ of Gowers and Maurey \cite{hi1}
satisfies $L(X)/{\cal S}(X)\simeq \R$ and therefore belongs to
$\Phi_\R$. This remark together with Theorem~\ref{hihi} gives us the
following corollary.

\begin{corollary}\label{hihi2} There is an infinite dimensional separable
real Banach space $X$, which does not support a $\uu$-frequently
hypercyclic operator. In particular, $X$ does not support a
frequently hypercyclic operator.
\end{corollary}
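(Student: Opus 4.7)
The plan is to deduce Corollary~\ref{hihi2} directly from Theorem~\ref{hihi} by exhibiting a concrete separable real Banach space that lies in the class $\Phi_\R$. Since Theorem~\ref{hihi} already establishes that no $\uu$-frequently hypercyclic operator exists on any $X \in \Phi_\R$ (and hence, a fortiori, no frequently hypercyclic operator), all that remains is to locate a separable real $X$ belonging to $\Phi_\R$.

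First I would recall the construction of Gowers and Maurey \cite{hi1}, which produces a separable infinite dimensional real HI space $X_{GM}$. By general structural results of Ferenczi \cite{fer1,fer2}, for any real HI space the quotient algebra $L(X)/\mathcal{S}(X)$ must be isomorphic to one of the three division rings $\R$, $\C$, or $\mathbb{H}$. The key fact, already noted in the discussion preceding the corollary, is Ferenczi's observation \cite{fer2} that this original Gowers--Maurey space $X_{GM}$ satisfies $L(X_{GM})/\mathcal{S}(X_{GM})\simeq \R$, so by the very definition of $\Phi_\R$ we have $X_{GM}\in\Phi_\R$.

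With this in hand, the proof reduces to one line: apply Theorem~\ref{hihi} to $X=X_{GM}$ to conclude that $X_{GM}$ supports no $\uu$-frequently hypercyclic operator, and therefore no frequently hypercyclic operator either, since frequent hypercyclicity implies $\uu$-frequent hypercyclicity.

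There is essentially no obstacle at this level; the entire technical content has been absorbed into Theorem~\ref{hihi} (which in turn rests on Theorem~\ref{main}) and into the deep constructions of HI spaces and Ferenczi's classification of their operator algebras. The only thing to be careful about is to cite the correct variant of the Gowers--Maurey space — specifically the one whose operator algebra modulo strictly singular operators is $\R$ rather than $\C$ or $\mathbb{H}$ — since Ferenczi \cite{fer2} shows that all three possibilities are realized by different separable real HI constructions, and only the $\R$-case lands inside $\Phi_\R$.
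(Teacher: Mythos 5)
Your proposal is correct and follows exactly the route the paper takes: it invokes Ferenczi's observation that the original separable real Gowers--Maurey HI space satisfies $L(X)/{\cal S}(X)\simeq\R$, hence lies in $\Phi_\R$, and then applies Theorem~\ref{hihi}. Your added caution about selecting the $\R$-variant (rather than the $\C$ or $\mathbb H$ cases of Ferenczi's trichotomy) matches the paper's own remark preceding the corollary.
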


\section{Proof of Theorem~\ref{main}}

\begin{theorem} \label{main1} Let $T$ be a continuous linear
operator on a real topological vector space $X$ and $x\in X$ be such
that there exists $f\in X^*\setminus\{0\}$ satisfying
\begin{equation}\label{lim}
\lim_{n\to\infty} |f((T-I)^nx)|^{1/n} =0 \ \ \ \text{for each}\ \ \
x\in X.
\end{equation}
Then $x$ is not a $\uu$-frequently hypercyclic vector for $T$.
\end{theorem}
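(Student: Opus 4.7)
The plan is to assume, for contradiction, that $x$ is a $\uu$-frequently hypercyclic vector for $T$ and to derive a contradiction by analyzing the scalar sequence $y_n := f(T^n x)$. Writing $S = T - I$ and $a_k = f(S^k x)$, the binomial theorem gives
\[
y_n \;=\; f((I+S)^n x) \;=\; \sum_{k=0}^{n} \binom{n}{k}\, a_k,
\]
and the hypothesis yields $|a_k|^{1/k}\to 0$. Since $f\in X^*\setminus\{0\}$ and $X$ is real, the image of $f$ equals $\R$, so for each $c\in\R$ and $\epsilon>0$ the preimage $f^{-1}((c-\epsilon,c+\epsilon))$ is a non-empty open subset of $X$, and the $\uu$-frequent hypercyclicity of $x$ forces $\{n\in\N:|y_n-c|<\epsilon\}$ to have positive upper density for every such $c,\epsilon$. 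In particular, $(y_n)$ takes arbitrarily large positive and arbitrarily large negative values on sets of positive upper density.

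Next I would upgrade the formula for $y_n$ to an entire interpolant. The super-exponential decay of $(a_k)$, together with the elementary bound $|\binom{z}{k}|\leq\binom{|z|+k-1}{k}$ and the identity $\sum_k\binom{R+k-1}{k}\rho^k=(1-\rho)^{-R}$, shows that $\psi(z) := \sum_{k\geq 0}\binom{z}{k}a_k$ converges locally uniformly on $\C$ and satisfies $|\psi(z)|\leq C_\epsilon e^{\epsilon|z|}$ for every $\epsilon>0$. Thus $\psi$ is an entire function of exponential type zero interpolating $(y_n)$ at the non-negative integers. By Jensen's formula the zero-counting function of any such $\psi$ satisfies $n(R)=o(R)$, so $\psi$ has very sparse zeros and cannot oscillate too rapidly on $\R$.

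The final step is to pit the density statement on $(y_n)$ against this rigidity of $\psi$. Concretely I would pick well-separated constants $c_1<c_2$, set $c=(c_1+c_2)/2$, and consider $\psi-c$, still of exponential type zero: the sets $\{n:y_n<c_1\}$ and $\{n:y_n>c_2\}$ both have positive upper density, and if they are sufficiently interleaved then $\psi-c$ must change sign linearly often in $[0,N]$, producing linearly many real zeros and contradicting $n(R)=o(R)$. The main obstacle, where the bulk of the technical work will live, is precisely this interleaving issue --- a priori the two positive-density sets could form long monotone blocks producing only $o(N)$ adjacent sign changes among the integers. I would expect to overcome this by applying the density condition simultaneously at many targets $c$, perhaps through the forward-difference identity $\Delta^m y_n = \sum_{k\geq 0}\binom{n}{k}a_{k+m}$ (which preserves the form of the hypothesis under shifting), or by a Paley--Wiener / Phragm\'en--Lindel\"of type estimate that converts the sampling $\psi(n)=y_n$ into an honest zero-counting bound on $\R$.
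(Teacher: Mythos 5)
Your setup is exactly the paper's: the Newton-series interpolant $F(z)=\sum_k\binom{z}{k}a_k$ with $a_k=f((T-I)^kx)$, the verification that it is entire of exponential type zero, the identity $F(n)=f(T^nx)$, and the Jensen-type bound $N_F(R)\leq \ln M_F(eR)+O(\ln R)$ forcing $o(R)$ zeros. However, the step you yourself flag as ``the main obstacle'' is a genuine gap, and it is precisely the point where the paper's proof has an idea your proposal lacks. Knowing that $\{n:y_n<c_1\}$ and $\{n:y_n>c_2\}$ both have positive \emph{upper} density does not produce linearly many sign changes: the two sets can occupy long alternating blocks $[1,N_1]$, $(N_1,N_2]$, $(N_2,N_3],\dots$ with $N_{j+1}\gg N_j$, each having upper density $1$ while contributing only $j$ sign changes up to $N_j$. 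Neither of your suggested repairs closes this; applying the density condition at many targets $c$ simultaneously still permits such block structures (upper density is a limsup and the various sets can realize it along different scales), and a Phragm\'en--Lindel\"of argument gives no lower bound on real zeros.

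The paper's resolution is to choose the open set more cleverly \emph{before} invoking $\uu$-frequent hypercyclicity: one first finds $u\in X$ with $f(u)<0$ and $f(Tu)>0$, hence an open neighborhood $U$ of $u$ on which both inequalities persist, and then takes $A=\{n:T^nx\in U\}$, which has positive upper density. For every $n\in A$ this yields $F(n)=f(T^nx)<0$ and $F(n+1)=f(T^{n+1}x)=f(T(T^nx))>0$ \emph{simultaneously}, so $F$ has a zero in each interval $(n,n+1)$, $n\in A$, giving $N_F(n+1)\geq|\{m\in A:m\leq n\}|$ and hence linearly many zeros along a subsequence --- no interleaving argument needed. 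The existence of such a $u$ is itself nontrivial: if no $u$ satisfied $f(u)<0<f(Tu)$, then $\ker f$ would be a closed $T$-invariant subspace of codimension $1$, contradicting Wengenroth's theorem that a hypercyclic operator on a real or complex topological vector space has no closed invariant subspaces of finite codimension. This external input (or something replacing it) is the missing ingredient; without it your final step does not go through.
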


\begin{proof} Assume that $x\in\uu(T)$. First, observe, that we can find $u\in
X$ such that $f(u)<0$ and $f(Tu)>0$. Indeed, otherwise, using
continuity of $f$ and $T$, we see that $f(Tu)\leq 0$ if $f(u)\leq
0$. Since $f(-y)=-f(y)$ for any $y\in X$, we also have $f(Tu)\geq 0$
if $f(u)\geq 0$. Hence $T(H)\subseteq H$, where $H=\ker f$. Thus,
$T$ has a closed invariant subspace of codimension 1. On the other
hand, in \cite{joh} it is proven that any hypercyclic operator on
any real or complex topological vector space has no closed invariant
subspaces of finite codimension. This contradiction shows that there
is $u\in X$ such that $f(u)<0$ and $f(Tu)>0$. Since $f$ and $T$ are
continuous, we can choose an open neighborhood $U$ of $u$ such that
\begin{equation} \label{pm}
f(v)<0\ \ \ \text{and}\ \ \ f(Tv)>0 \ \ \ \text{for any $v\in U$.}
\end{equation}
Since $x\in\uu(T)$, there exists a subset $A$ of $\Z_+$ of positive
upper density such that
\begin{equation}\label{pm0}
\text{$f(T^nx)\in U$ for any $n\in A$.}
\end{equation}
Consider now the series
\begin{equation} \label{ps}
F(z)=f(x)+\sum_{n=1}^\infty
\frac{f((T-I)^nx)}{n!}z(z-1)\dots(z-n+1), \quad z\in\C.
\end{equation}
Condition (\ref{lim}) implies that the series in (\ref{ps}) is
uniformly convergent on any compact subset of $\C$ and therefore
defines an entire function $F$. Moreover, $F$ has exponential type
0, that is,
\begin{equation}\label{type}
\lim_{R\to\infty}\frac{\ln M_F(R)}{R}=0,\ \ \text{where}\ \
M_F(R)=\max_{|z|\leq R}|F(z)|.
\end{equation}
Indeed, by (\ref{lim}), for any $\epsilon\in(0,1)$, there is
$m\in\N$ such that $|f((T-I)^nx)|<\epsilon^n$ for $n>m$. Then
\begin{align*}
M_F(R)&\leq |f(x)|+\sum_{n=1}^\infty
\frac{|f((T-I)^nx)|}{n!}R(R+1)\dots(R+n-1)\leq
\\
&\leq1+\sum_{n=1}^\infty
\frac{\epsilon^n}{n!}R(R+1)\dots(R+n-1)+O(R^m)=(1-\epsilon)^{-R}+O(R^m)=
\\
&=e^{a(\epsilon)R}+O(R^m)=O(e^{a(\epsilon)R})\ \ \text{as
$R\to\infty$, where $a(\epsilon)=-\ln(1-\epsilon)$}.
\end{align*}
Since $a(\epsilon)\to0$ as $\epsilon\to 0$, (\ref{type}) follows.
From the definition of $F$ it also easily follows that
\begin{equation}\label{fF}
\text{$F(k)=f(T^kx)$ for any $k\in\Z_+$}.
\end{equation}
Using (\ref{fF}), (\ref{pm0}) and (\ref{pm}), we see that $F(n)<0$
and $F(n+1)>0$ for any $n\in A$. Since $F$ is real on the real axis,
for any $n\in A$, there is $t_n\in (n,n+1)$ such that $F(t_n)=0$.
Let now $N_F(R)$ be the number of zeros of $F$ in the set
$\{z\in\C:0<|z|<R\}$. The last observation yields $N_F(n+1)\geq k_n$
for any $n\in\Z_+$, where $k_n=|\{m\in A:m\leq n\}|$ is the counting
function of the set $A$. On the other hand, by a corollary of
Jensen's Theorem, see \cite[p.~15]{levin}, we have
\begin{equation}\label{jen}
N_F(R)\leq\ln M_F(eR)+O(\ln R) \quad \text{as $R\to\infty$}.
\end{equation}
Hence
$$
\slim_{n\to\infty}\frac{\ln M_F(e(n+1))}{n}\geq
\slim_{n\to\infty}\frac{N_F(n+1)}{n}\geq
\slim_{n\to\infty}\frac{k_n}{n}=\ud(A)>0,
$$
which contradicts (\ref{type}). \end{proof}

\begin{corollary} \label{tstar} Let $X$ be a Banach space and $T\in L(X)$ be
such that there exists $f\in X^*\setminus\{0\}$ for which
$\limm_{n\to\infty}\|T^{*n}f\|^{1/n}=0$. Then $I+T$ is not
$\uu$-frequently hypercyclic.
\end{corollary}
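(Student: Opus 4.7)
The plan is to derive Corollary~\ref{tstar} as a direct application of Theorem~\ref{main1} to the operator $S=I+T$. Since $(S-I)^n=T^n$, the hypothesis of Theorem~\ref{main1} (with $S$ in place of $T$) becomes: there exists a non-zero continuous real-linear functional $g$ on $X$ with $\lim_{n\to\infty}|g(T^n y)|^{1/n}=0$ for every $y\in X$. Once this is verified, Theorem~\ref{main1} yields that no $x\in X$ is $\uu$-frequently hypercyclic for $S=I+T$, which is exactly the conclusion we want.

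The verification of the condition proceeds by duality. For any $y\in X$ we have $|f(T^n y)|=|(T^{\ast n}f)(y)|\leq \|T^{\ast n}f\|\cdot\|y\|$, so
$$
|f(T^n y)|^{1/n}\leq \|T^{\ast n}f\|^{1/n}\cdot\|y\|^{1/n}\longrightarrow 0\cdot 1=0
$$
for $y\neq 0$, and the limit is trivially $0$ when $y=0$. Hence $f$ satisfies the required limit condition in the real case, where we may simply take $g=f$.

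In the complex case Theorem~\ref{main1} does not apply directly, since it is stated for real topological vector spaces. I would regard the complex Banach space $X$ as a real Banach space $X_\R$ (its topology, and hence the notion of $\uu$-frequent hypercyclicity of $I+T$, is unchanged) and set $g=\re f\in (X_\R)^*$. Non-vanishing of $g$ follows from non-vanishing of $f$: picking $v\in X$ with $f(v)\neq 0$, one of $\re f(v)$ and $\re f(iv)=-\im f(v)$ is non-zero. Because $|\re f(y)|\leq|f(y)|$ for every $y$, the estimate above immediately transfers to $g$, giving $|g(T^n y)|^{1/n}\to 0$ for every $y\in X$.

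With the hypothesis of Theorem~\ref{main1} in hand for $S=I+T$ and the functional $g$, the theorem asserts that no vector $x\in X$ is a $\uu$-frequently hypercyclic vector for $I+T$; therefore $I+T$ is not $\uu$-frequently hypercyclic, proving the corollary. There is essentially no obstacle here: the result is a packaging of Theorem~\ref{main1}, with the only minor point being the complex-to-real passage via $\re f$.
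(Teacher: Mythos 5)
Your proposal is correct and follows the paper's own route exactly: apply Theorem~\ref{main1} to $I+T$ (so that $(I+T-I)^n=T^n$), verify condition (\ref{lim}) via $|f(T^ny)|\leq\|T^{*n}f\|\,\|y\|$, and in the complex case pass to the underlying real space with the functional $\re f$. The extra details you supply (non-vanishing of $\re f$, the $\|y\|^{1/n}\to1$ estimate) are fine and consistent with what the paper leaves implicit.
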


\begin{proof} If $\K=\R$, we just apply Theorem~\ref{main1} to the operator $I+T$
to see that the latter has no $\uu$-frequently hypercyclic vectors.
If $\K=\C$, we consider $X$ as a real Banach space, $T$ as an
$\R$-linear operator and apply Theorem~\ref{main1} to the operator
$I+T$ and the functional ${\tt Re}\,f$.
\end{proof}

Theorem~\ref{main} follows immediately from Corollary~\ref{tstar}
since $\|T^{*n}f\|\leq \|f\|\|T^{*n}\|=\|f\|\|T^n\|$ for any $f\in
X^*$ and any $n\in\Z_+$.

\section{Rotations and powers of $\uu$-frequently hypercyclic operators \label{secrot}}

In order to reduce Theorem~\ref{hihi} and Theorem~\ref{spec} to
Theorem~\ref{main}, we need to show that the statement of
Theorem~\ref{main} remains true if we replace $I+T$ be $zI+T$ for
any $z\in\K$ with $|z|=1$. In order to do so, we need to demonstrate
that the class of $\uu$-frequently hypercyclic operators is stable
under multiplication by scalars $z$ with $|z|=1$.

For hypercyclic operators acting on complex Banach spaces, this kind
of stability was proved by Le\'on-Saavedra and M\"uller
\cite{muller}. In the case $\K=\R$ the fact that $T$ is hypercyclic
if and only if $-T$ is hypercyclic follows from the result of Ansari
\cite{ansa2}, who showed that hypercyclicity of $T$ implies
hypercyclicity of its powers. In particular $T^2$ is hypercyclic if
$T$ is. Since $(-T)^2=T^2$, the latter observation shows that $T$ is
hypercyclic if and only if $-T$ is. We are going to prove the same
stability results for $\uu$-frequently hypercyclic operators.
Concerning the powers, we shall use a result of Bourdon and Feldman
\cite{bufe}, who demonstrated that an orbit of a continuous linear
operator on a locally convex topological vector space is either
everywhere dense or nowhere dense. The only obstacle that did not
allow the same construction to work for arbitrary topological vector
spaces was that the proof of the density of the range of $p(T)$ for
any non-zero polynomial $p$ and a hypercyclic operator $T$ used the
duality argument. This obstacle was removed by Wengenroth
\cite{joh}, which allowed him to extend the Bourdon and Feldman
theorem to arbitrary topological vector spaces, see also the survey
\cite{ge2}. Throughout this section $X$ is a topological vector
space and $T\in L(X)$.

\begin{thmA} Let $x\in X$ be such that
$F^T_x=\overline{\{T^nx:n\in\Z_+\}}$ has nonempty interior. Then
$F^T_x=X$.
\end{thmA}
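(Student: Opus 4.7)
The plan is to prove Theorem~A by following the Bourdon--Feldman argument for somewhere-dense orbits \cite{bufe}, using Wengenroth's innovation \cite{joh} at the one step where the original proof in the locally convex setting relied on duality. The argument naturally splits into two ingredients.

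\emph{Ingredient~1: density of polynomial ranges.} The first step is to prove that for every non-zero polynomial $p\in\K[z]$ the subspace $p(T)(X)$ is dense in $X$. Assume otherwise: then $Y=\overline{p(T)(X)}$ is a proper closed $T$-invariant subspace, and the induced operator $\widetilde T$ on $X/Y$ satisfies $p(\widetilde T)=0$, so $\widetilde T$ is algebraic and the orbit of $\pi(x)$ under $\widetilde T$ lies in the finite-dimensional cyclic subspace $\spann\{\pi(x),\widetilde T\pi(x),\dots,\widetilde T^{d-1}\pi(x)\}\subseteq X/Y$, where $d=\deg p$. Since the quotient map $\pi$ is open, $\pi(\Int F^T_x)$ is a non-empty open subset of $X/Y$ lying inside the closure of this cyclic subspace; being finite-dimensional, the latter is already closed and must therefore coincide with $X/Y$, so $X/Y$ is finite-dimensional and $\widetilde T$ has a somewhere-dense orbit on it. This is impossible: orbits of linear operators in finite dimensions lie on low-dimensional real algebraic subvarieties whose Euclidean closures cannot contain an open ball. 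This is exactly the duality-free argument of Wengenroth \cite{joh}.

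\emph{Ingredient~2: an interior point has dense orbit.} Fix $y\in\Int F^T_x$ and a subsequence $n_k\to\infty$ with $T^{n_k}x\to y$. Continuity of $T^m$ gives $T^my=\lim_k T^{m+n_k}x\in F^T_x$ for every $m\geq 0$, so $F^T_y\subseteq F^T_x$. The heart of the proof is the reverse inclusion $F^T_y=X$. I would follow Bourdon--Feldman here: using $\Int F^T_x$ as a recurrence target, pick nested return times $n_{k_j}$ with $T^{n_{k_j}}x\to y$ and extract recurrences $T^{n_{k_{j+1}}-n_{k_j}}y\to y$; combining such recurrences with Ingredient~1 one shows that every image $p(T)y$ of $y$ under a polynomial can be approximated by orbital iterates $T^my$, and the density of $\{p(T)y:p\in\K[z]\}$ in $X$ then forces $F^T_y=X$, whence $F^T_x=X$.

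The most delicate step, and the main technical obstacle, is the polynomial-to-orbit transfer in Ingredient~2. In locally convex spaces Bourdon and Feldman carry it out by a Hahn--Banach separation argument applied to potential finite-codimensional invariant subspaces, and it is exactly this duality that Wengenroth \cite{joh} removes, allowing the conclusion to hold on an arbitrary topological vector space as stated in Theorem~A.
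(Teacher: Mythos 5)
First, a point of reference: the paper does not prove Theorem~A at all. It is imported verbatim as the Bourdon--Feldman theorem \cite{bufe} in the form extended to arbitrary topological vector spaces by Wengenroth \cite{joh}, so the only meaningful comparison is with the cited proofs. Measured against those, your Ingredient~1 is an accurate account of Wengenroth's contribution: if $p(T)(X)$ is not dense one passes to the quotient by $Y=\overline{p(T)(X)}$, the relation $p(\widetilde T)=0$ confines the orbit of $\pi(x)$ to a finite-dimensional subspace which must equal $X/Y$ because it contains the non-empty open set $\pi(\Int F^T_x)$, and one concludes by the fact that no space of finite positive dimension carries a somewhere dense orbit. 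But your one-line justification of that last fact is wrong as stated: the orbit of $v$ under $\lambda R_\theta$ on $\R^2$ lies on a logarithmic spiral, not on a real algebraic subvariety. The fact is true and standard, but it needs its own argument (or a citation), not this one.

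The genuine gap is in Ingredient~2. The step ``pick nested return times $n_{k_j}$ with $T^{n_{k_j}}x\to y$ and extract recurrences $T^{n_{k_{j+1}}-n_{k_j}}y\to y$'' is invalid: writing $T^{n_{k_{j+1}}}x=T^{m}(T^{n_{k_j}}x)$ with $m=n_{k_{j+1}}-n_{k_j}$, you would need $T^{n_{k_j}}x$ to lie in $(T^{m})^{-1}(T^{m}y+U)$, i.e.\ how close $T^{n_{k_j}}x$ must be to $y$ depends on $m$, which is determined only by the \emph{next} return time; since the iterates $T^m$ are not equicontinuous, the extraction is circular. (Compare the paper's Lemma~\ref{mx2}, where exactly this difficulty is handled by fixing $m$ \emph{first} and then choosing the return times inside $(T^m)^{-1}(V)$.) The remaining assertion --- that every $p(T)y$ can be approximated by orbital iterates $T^my$ --- is precisely the content of the theorem and is nowhere established; Bourdon and Feldman do not argue by recurrence but prove, by contradiction using the open complement $X\setminus F^T_x$, the key lemma that $p(T)x\in F^T_x$ for every polynomial $p$, after which density of the subspace $\{p(T)x:p\in\K[z]\}$ (a closed subspace containing the open set $\Int F^T_x$ must be all of $X$) finishes the proof. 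Finally, your closing paragraph misplaces the duality: in \cite{bufe} Hahn--Banach enters exactly in proving density of $p(T)(X)$ --- a non-dense range yields an eigenvector of the adjoint, which maps the orbit onto a nowhere dense subset of the scalar field --- and that is the step Wengenroth replaced by the quotient argument. Your own Ingredient~1 says so, and the last paragraph contradicts it.
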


Although the above mentioned result of Le\'on-Saavedra and M\"uller
is formulated and proved for operators acting on Banach spaces, the
same proof with minor modifications (just replace convergent
sequences by convergent nets and use the result of Wengenroth to
ensure density of the ranges of polynomials of $T$ instead of the
duality argument) gives the following extension of their result.

\begin{thmB} Let $z\in\K$, $|z|=1$. Then the sets of hypercyclic
vectors for $T$ and $zT$ coincide.
\end{thmB}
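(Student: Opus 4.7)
The plan is to follow the L\'eon-Saavedra--M\"uller proof of the analogous Banach space statement, introducing the two modifications described in the preceding paragraph: replace convergent sequences by convergent nets wherever the original argument relies on sequential compactness or sequential closure, and replace the duality-based proof that $p(T)X$ is dense for every non-zero polynomial $p$ by Wengenroth's theorem, which rules out closed invariant subspaces of finite codimension for any hypercyclic operator on a topological vector space.

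In the real case $\K=\R$ only $z=\pm 1$ is at stake, so it suffices to show that $T$ and $-T$ have the same hypercyclic vectors. Given $x$ hypercyclic for $T$, Ansari's theorem on powers (whose extension to topological vector spaces is a routine consequence of Theorem~A, since the orbit $\{T^n x\}$ is covered by the finitely many $T^k$-orbits $T^r\{T^{kn}x\}$, $0\leq r<k$) provides that $x$ is hypercyclic for $T^2=(-T)^2$. Hence $\{(-T)^{2n}x:n\in\Z_+\}=\{T^{2n}x:n\in\Z_+\}$ is already dense in $X$, which forces the full $-T$-orbit of $x$ to be dense, so $x$ is hypercyclic for $-T$; the reverse containment is symmetric.

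In the complex case, fix $\lambda\in\T$ and a hypercyclic vector $x$ for $T$; one must show that $x$ is hypercyclic for $\lambda T$. The L\'eon-Saavedra--M\"uller construction combines Ansari's power theorem (again supplied via Theorem~A) with an approximation argument that, for every open $U\subseteq X$ and every balanced neighbourhood $V$ of $0$, produces an index $n$ such that $T^n x\in U$ and $(\lambda^n-1)T^n x\in V$, whence $(\lambda T)^n x\in U+V$. Refining the pair $(U,V)$ through a net of open neighbourhoods then yields density of the $\lambda T$-orbit of $x$, and a final application of Theorem~A promotes this into $x\in\uu(T)$ for $\lambda T$. The construction requires $p(T)X$ to be dense for every non-zero polynomial $p\in\C[t]$, and it is precisely at this step that the Banach-space duality argument of the original proof is replaced by Wengenroth's theorem.

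The main obstacle is the complex case, and specifically the verification that every metric or dual-space ingredient of L\'eon-Saavedra and M\"uller's original argument survives the passage to the topological-vector-space setting. The net-versus-sequence substitution is essentially cosmetic; what genuinely requires care is the density-of-polynomial-range step, which is exactly what Wengenroth's theorem supplies by ensuring $\overline{p(T)X}=X$ for every non-zero $p$.
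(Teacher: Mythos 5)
Your proposal takes essentially the same route as the paper, which likewise does not reprove Theorem~B from scratch but invokes the Le\'on-Saavedra--M\"uller argument with exactly the two modifications you describe: convergent nets in place of convergent sequences, and Wengenroth's theorem (no closed invariant subspaces of finite codimension, hence density of $p(T)X$ for non-zero polynomials $p$) in place of the Banach-space duality argument. The only blemish is the phrase ``promotes this into $x\in\uu(T)$ for $\lambda T$'' near the end of the complex case, which is evidently a slip for ``$x$ is a hypercyclic vector for $\lambda T$'' --- Theorem~B concerns ordinary hypercyclic vectors, and once the $\lambda T$-orbit of $x$ is dense there is nothing left to promote.
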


For shortness, we denote by $\Lambda$ the set of all subsets of
$\Z_+$ of positive upper density. We also fix a base $\U$ of open
neighborhoods of 0 in $X$.

\medskip

\noindent{\bf Definition 2.} \ For $x\in X$ we denote
\begin{equation} \label{mx}
M^T_x=\bigl\{y\in X: \{n\in\Z_+:y-T^nx\in U\}\in\Lambda \ \
\text{for any $U\in\U$}\bigr\}.
\end{equation}
Clearly, $x\in\uu(T)$ if and only if $M^T_x=X$.

If additionally $X$ is complex, we denote
\begin{align} \label{nx}
&N^T_x=\bigl\{y\in X: \{n\in\Z_+:C_n(T,x)\cap (y+U)\neq
\varnothing\}\in \Lambda\ \ \text{for any $U\in\U$}\bigr\},
\\
\notag &\qquad\text{where}\ \ C_n(T,x)=\{zT^nx:z\in\T\}.
\end{align}

We start by establishing few straightforward properties of the sets
$M^T_x$ and $N^T_x$ defined in (\ref{mx}) and (\ref{nx}). The
following lemma is an elementary exercise. We leave the proof to the
reader.

\begin{lemma}\label{mx1} For any $x\in X$, $M^T_x$ is closed in $X$,
$T(M^T_x)\subseteq M^T_x$, $M^T_{Tx}=M^T_x$ and $M^T_{zx}=zM^T_x$
for each $z\in\K$. If $X$ is complex, then for any $x\in X$, $N^T_x$
is closed in $X$, $T(N^T_x)\subseteq N^T_x$, $N^T_{Tx}=N^T_x$ and
$N^T_{zx}=N^{zT}_x= N^T_x$ for any $z\in\T$.
\end{lemma}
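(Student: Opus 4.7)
The plan is to reduce every assertion to two elementary facts about the family $\Lambda$ of subsets of $\Z_+$ of positive upper density: (a) $\Lambda$ is upward closed, and (b) upper density is invariant under finite shifts $A\mapsto A\pm k$ and under adding or removing finitely many elements. Together these say that to check $B\in\Lambda$ it suffices to exhibit some shift of a subset of $B$ that already lies in $\Lambda$.

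For closedness of $M^T_x$ I would fix $y$ in the closure, pick $U\in\U$, choose a balanced $V\in\U$ with $V+V\subseteq U$, and select $y'\in M^T_x$ with $y-y'\in V$. The decomposition $y-T^nx=(y-y')+(y'-T^nx)$ shows that $\{n:y-T^nx\in U\}\supseteq\{n:y'-T^nx\in V\}$; the right-hand side lies in $\Lambda$ by definition of $M^T_x$, so (a) finishes the step. Closedness of $N^T_x$ runs identically, with a witness $wT^nx\in y'+V$, $w\in\T$, in place of $T^nx\in y'+V$. For $T(M^T_x)\subseteq M^T_x$ I would use continuity of $T$ at $0$ to pick $V\in\U$ with $T(V)\subseteq U$; then for $y\in M^T_x$ the set $\{n:y-T^nx\in V\}$ is in $\Lambda$, applying $T$ gives $\{n:Ty-T^{n+1}x\in U\}\in\Lambda$, and a single shift via (b) yields the required conclusion. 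The analogous manipulation, starting from a point $wT^nx$ witnessing $C_n(T,x)\cap(y+V)\ne\varnothing$, proves $T(N^T_x)\subseteq N^T_x$.

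The equalities $M^T_{Tx}=M^T_x$ and $N^T_{Tx}=N^T_x$ are immediate from $T^n(Tx)=T^{n+1}x$, which only shifts the indexing set by one (apply (b)). The identity $M^T_{zx}=zM^T_x$ for $z\ne0$ reduces to the fact that $U\mapsto zU$ is a bijection of the neighborhood base of $0$, and for $z=0$ it becomes $M^T_0=\{0\}$ via Hausdorffness of $X$. In the complex case the key observation is $C_n(T,zx)=C_n(T,x)=C_n(zT,x)$ for $z\in\T$, since multiplication by $z$ or by $z^n$ just reparametrizes the unit circle; this makes $N^T_{zx}=N^{zT}_x=N^T_x$ transparent. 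I do not foresee any genuine obstacle: the whole lemma is a bookkeeping exercise, the only point demanding slight care being the correct choice of auxiliary neighborhood $V$ in each step, which justifies the authors' description of the result as an \emph{elementary exercise}.
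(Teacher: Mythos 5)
Your proof is correct; the paper gives no argument for this lemma (it is explicitly left to the reader as an elementary exercise), and your verification via the two properties of $\Lambda$ --- upward closure and invariance of upper density under finite shifts --- is exactly the intended routine one. The only blemish is the degenerate case $z=0$: there $0\cdot M^T_x$ is empty whenever $M^T_x$ is, while $M^T_0=\{0\}$, so the identity $M^T_{zx}=zM^T_x$ should really be read for $z\neq 0$; since the lemma is only ever invoked with $|z|=1$, nothing is lost.
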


\begin{lemma}\label{mx2} If $u,x,y\in X$ are such that
$y\in M^T_x$ and $x\in M^T_u$, then $y\in M^T_u$.
\end{lemma}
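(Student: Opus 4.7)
The plan is to deduce the lemma from the closedness and $T$-invariance of $M^T_u$ (already recorded in the previous lemma) together with an elementary containment $M^T_x\subseteq\overline{\{T^nx:n\in\Z_+\}}$.

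First I would observe that by Lemma~\ref{mx1}, the set $M^T_u$ is closed in $X$ and satisfies $T(M^T_u)\subseteq M^T_u$. Since the hypothesis gives $x\in M^T_u$, an immediate induction yields $T^nx\in M^T_u$ for every $n\in\Z_+$, and then closedness of $M^T_u$ upgrades this to
$$
\overline{\{T^nx:n\in\Z_+\}}\subseteq M^T_u.
$$

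Next I would establish the inclusion $M^T_x\subseteq\overline{\{T^nx:n\in\Z_+\}}$. Indeed, if $y\in M^T_x$, then for every $U\in\U$ the set $\{n\in\Z_+:y-T^nx\in U\}$ belongs to $\Lambda$, hence is non-empty; as $U$ ranges over the base $\U$ of neighbourhoods of $0$, the sets $y-U$ form a base of neighbourhoods of $y$, and the existence in each such neighbourhood of a point $T^nx$ shows that $y$ lies in the closure of the forward orbit of $x$.

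Combining the two inclusions gives $y\in M^T_x\subseteq\overline{\{T^nx:n\in\Z_+\}}\subseteq M^T_u$, which is the desired conclusion. The argument is essentially formal: no part of it is delicate, since the only properties used are the ones already provided by Lemma~\ref{mx1} together with the trivial implication ``positive upper density $\Rightarrow$ non-empty''. Accordingly I do not anticipate any genuine obstacle; the only point that requires a sentence of care is the reformulation of the condition $y-T^nx\in U$ as membership of $T^nx$ in an arbitrary neighbourhood of $y$.
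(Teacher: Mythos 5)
Your proof is correct, but it takes a genuinely different route from the paper's. The paper proves the lemma by a single direct computation: it picks $V\in\U$ with $V+V\subseteq U$, uses $y\in M^T_x$ only to extract \emph{one} index $m$ with $y-T^mx\in V$, then applies the hypothesis $x\in M^T_u$ with the neighbourhood $(T^m)^{-1}(V)$ to get a set $A\in\Lambda$ of times $k$ with $x-T^ku\in (T^m)^{-1}(V)$, and concludes that $y-T^nu\in U$ for all $n\in m+A$, a set of positive upper density. You instead delegate all the quantitative work to Lemma~\ref{mx1}: since $M^T_u$ is closed and $T$-invariant and contains $x$, it contains $F^T_x=\overline{\{T^nx:n\in\Z_+\}}$, and the trivial inclusion $M^T_x\subseteq F^T_x$ (positive upper density implies non-empty) finishes the job. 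Both arguments are sound, and both in fact use the hypothesis $y\in M^T_x$ only through the weaker statement $y\in F^T_x$. Your version is more modular and makes that weakening explicit, at the cost of leaning on the unproved-in-print Lemma~\ref{mx1} (whose relevant parts -- closedness via the same $V+V\subseteq U$ trick, and invariance via continuity of $T$ plus shift-invariance of upper density -- are exactly the two ingredients the paper's direct computation combines in one step). Since Lemma~\ref{mx1} precedes Lemma~\ref{mx2}, there is no circularity, and your argument is a legitimate alternative proof.
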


\begin{proof} Let $U\in \U$. Pick $V\in \U$ such that $V+V\subset U$.
Since $y\in M^T_x$, we can find $m\in\Z_+$ such that $y-T^mx\in V$.
Since $x\in M^T_u$, we have
$A=\{k\in\Z_+:x-T^ku\in(T^m)^{-1}(V)\}\in\Lambda$. Then for any
$k\in A$, we have
$$
y-T^{k+m}u=y-T^mx+T^m(x-T^ku)\in V+T^m((T^m)^{-1}(V))\subseteq
V+V\subseteq U.
$$
Thus $y-T^nu\in U$ for any $n\in m+A$. Since $m+A\in\Lambda$, we
have $y\in M^T_u$.
\end{proof}

\begin{lemma} \label{mx5} If $X$ is complex and $x\in N^T_x$, then $x\in
M^T_x$.
\end{lemma}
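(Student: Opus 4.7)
The plan is a two-step reduction. First, I would extract a single $z^{\star}\in\T$ with $z^{\star-1}x\in M^T_x$ via pigeonhole on $\T$ followed by compactness. Second, I would show that the set $S=\{z\in\T:z^{-1}x\in M^T_x\}$ is a closed, multiplicatively-closed subset of the compact group $\T$; the closure of the cyclic semigroup generated by any $z^{\star}\in S$ must then contain $1$, whence $x\in M^T_x$.

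For the first step, fix a balanced $U\in\U$, pick a balanced $V$ with $V+V+V\subseteq U$, and choose $\delta\in(0,1)$ so that $|\lambda|\le\delta$ implies $\lambda x\in V$. Cover $\T$ by finitely many arcs of diameter $\le\delta$. Since $x\in N^T_x$, the set $A=\{n:\exists\,z_n\in\T,\ z_nT^nx-x\in V\}$ has positive upper density, and by pigeonhole one arc, centered at some $z_0=z_0(U)$, captures the chosen $z_n$ for a subset $A_0\subseteq A$ of positive upper density. Writing $T^nx=z_n^{-1}(x+w_n)$ with $w_n\in V$, the expansion
\[
z_0T^nx-x=(z_nT^nx-x)+(z_0z_n^{-1}-1)(x+w_n)
\]
together with the estimate $|z_0z_n^{-1}-1|=|z_0-z_n|\le\delta$ and the balancedness of $V$ puts each summand in $V$, so $z_0T^nx-x\in V+V+V\subseteq U$ for every $n\in A_0$. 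Applying this to a nested sequence $U_k\downarrow\{0\}$ and passing to a subsequence along which $z_0(U_k)\to z^{\star}\in\T$ by compactness, the identical expansion with $z^{\star}$ in place of $z_0$ gives $\ud\{n:z^{\star}T^nx-x\in U\}>0$ for every balanced $U$; since balanced neighborhoods form a base and $z^{\star}U=U$, this is exactly $z^{\star-1}x\in M^T_x$.

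For the second step, $S$ is closed because $M^T_x$ is closed (Lemma~\ref{mx1}) and $z\mapsto z^{-1}x$ is continuous. For multiplicative closure, given $z_1,z_2\in S$, Lemma~\ref{mx1} gives $M^T_{z_2^{-1}x}=z_2^{-1}M^T_x$, so $z_1^{-1}z_2^{-1}x\in M^T_{z_2^{-1}x}$; combined with $z_2^{-1}x\in M^T_x$, Lemma~\ref{mx2} yields $z_1^{-1}z_2^{-1}x\in M^T_x$, i.e.\ $z_1z_2\in S$. Hence $(z^{\star})^k\in S$ for all $k\ge1$, and closedness of $S$ forces $\overline{\{(z^{\star})^k:k\ge1\}}\subseteq S$. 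This closure contains $1$ in either case—$z^{\star}$ is a root of unity, or $\{(z^{\star})^k\}$ is dense in $\T$—so $1\in S$, which reads $x\in M^T_x$.

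The main obstacle is the first step: replacing the $n$-dependent phase $z_n$ by a single $z_0$ naively introduces the term $(z_0-z_n)T^nx$, which is a priori uncontrolled because no norm bound on $T^n$ is available. The way out is the identity $T^nx=z_n^{-1}(x+w_n)$, which rewrites this product as $(z_0z_n^{-1}-1)(x+w_n)$, a quantity small purely by the choice of $\delta$ and the balancedness of $V$ and with no dependence on the growth of $T^n$.
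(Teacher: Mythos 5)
Your argument is correct and follows essentially the same route as the paper: a pigeonhole over a finite cover of $\T$ to produce, for each neighborhood, a single unimodular $z_0(U)$ that works on a set of positive upper density, then compactness of $\T$ to extract one $z^{\star}$ working for all neighborhoods, and finally the semigroup argument via Lemmas~\ref{mx1} and~\ref{mx2} showing that the closed set of good rotations is multiplicatively closed and hence contains $1$. The one point to tidy up is your extraction of $z^{\star}$ along a nested sequence $U_k\downarrow\{0\}$, which tacitly assumes a countable neighborhood base at $0$; since the lemma is stated for an arbitrary topological vector space, you should instead (as the paper does) invoke the finite intersection property of the closed sets $\overline{F}(U)$, $U\in\U$, in the compact space $\T$ --- a routine replacement of sequences by nets that does not affect the substance of your proof.
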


\begin{proof} Clearly the set $F=\{\mu\in\T:\mu x\in M^T_x\}$ is closed
since $M^T_x$ is. Let us verify that $F$ is non-empty. For each
$U\in\U$ consider
$$
F(U)=\bigl\{z\in\T:\{n\in\Z_+:zx-T^nx\in U\}\in\Lambda\bigr\}
$$
and $\overline{F}(U)$ be the closure of $F(U)$ in $\T$. Clearly
$F=\bigcap\{F(U):U\in\U\}$. Let us show that the sets $F(U)$ are
non-empty. Fix $U\in\U$. Choose $V\in \U$ and $\epsilon>0$ such that
$V+D_\epsilon x\subseteq U$, where
$D_\epsilon=\{z\in\C:|z|<\epsilon\}$. Since $x\in N^T_x$, there
exist $A\in\Lambda$ and a sequence $\{z_m\}_{m\in A}$ of elements of
$\T$ such that $z_mx-T^mx\in V$ for any $m\in A$. Let $k\in\N$ be
such that $k>\epsilon^{-1}$ and let $w_j=e^{2\pi ij/k}$ for $0\leq
j\leq k-1$. Then for any $m\in A$ we can pick $\nu(m)\in
\{0,\dots,k-1\}$ such that $|z_m-w_{\nu(m)}|<\epsilon$. Clearly $A$
is the union of $A_j=\{m\in A:\nu(m)=j\}$ for $0\leq j\leq k-1$.
Since the union of $A_j$ has positive upper density, there exists
$j\in\{0,\dots,k-1\}$ such that $A_j\in \Lambda$. For each $m\in
A_j$, we have
$$
w_jx-T^mx=z_mx-T^mx+(w_j-z_m)x\in V+D_\epsilon x\subset U.
$$
Hence $w_j\in F(U)$ and $F(U)$ is non-empty. From the definition of
the set $F(U)$, we also have that $\overline{F}(V)\subset J_\epsilon
F(V) \subseteq F(U)$, where $J_\epsilon=\{e^{it}:-\epsilon\leq t\leq
\epsilon\}$. Thus for each $U\in\U$ there is $V\in \U$ such that
$\overline{F}(V)\subset F(U)$. It follows that
\begin{equation}\label{fuv}
F=\bigcap_{U\in \U}F(U)=\bigcap_{U\in \U}\overline{F}(U).
\end{equation}
Finally, for any $U_1,\dots,U_n\in\U$, we have
$$
\varnothing\neq \beta(U_1\cap{\dots}\cap U_n)\subseteq
F(U_1)\cap{\dots}\cap F(U_n)\subseteq
\overline{F}(U_1)\cap{\dots}\cap\overline{F}(U_n).
$$
Thus $\{\overline{F}(U):U\in\U\}$ is a family of closed subsets of
the compact topological space $\T$, any finite subfamily of which
has nonempty intersection. Hence it has non-empty intersection and
(\ref{fuv}) implies that $F\neq\varnothing$.

Thus there is $z\in\T$ such that $zx\in M^T_x$. By Lemma~\ref{mx1},
then $z^2x\in M^T_{zx}$. According to Lemma~\ref{mx2}, the
inclusions $zx\in M^T_x$ and $z^2x\in M^T_{zx}$ imply $z^2x\in
M^T_x$. Proceeding inductively, we have $\{z^nx:n\in\N\}\subset
M^T_x$. That is, $Q_z=\{z^n:n\in\N\}\subseteq F$. Since $F$ is
closed $\overline{Q_z}\subseteq F$. Since $1\in \overline{Q_z}$ for
any $z\in\T$, we see that $1\in F$. That is, $x\in M^T_x$.
\end{proof}

\begin{lemma}\label{mx6} If $x\in X$ is such that $x\in M^T_x$, then
$M^T_x=F^T_x$, where $F^T_x=\overline{\{T^nx:n\in\Z_+\}}$.
\end{lemma}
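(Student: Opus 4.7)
The plan is to prove the two inclusions $M^T_x\subseteq F^T_x$ and $F^T_x\subseteq M^T_x$ separately, using Lemma~\ref{mx1} for the structural properties of $M^T_x$ and essentially only the definition for the rest.

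First I would verify the easy inclusion $M^T_x\subseteq F^T_x$. Let $y\in M^T_x$ and let $W$ be an arbitrary open neighborhood of $y$, written as $W=y+W'$ for some $0$-neighborhood $W'$. Since inversion $v\mapsto -v$ is a homeomorphism of $X$, the set $-W'$ is again a $0$-neighborhood, so I can choose $U\in\U$ with $U\subseteq -W'$. By the definition of $M^T_x$ the set $\{n\in\Z_+:y-T^nx\in U\}$ lies in $\Lambda$ and in particular is non-empty; any such $n$ satisfies $y-T^nx\in-W'$, i.e.\ $T^nx\in y+W'=W$. Hence $W$ meets the orbit $\{T^nx:n\in\Z_+\}$, and $y\in F^T_x$.

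For the reverse inclusion $F^T_x\subseteq M^T_x$ I would appeal directly to Lemma~\ref{mx1}, which tells us that $M^T_x$ is closed in $X$ and satisfies $T(M^T_x)\subseteq M^T_x$. The hypothesis $x\in M^T_x$ combined with $T$-invariance gives, by a trivial induction on $n$, that $T^nx\in M^T_x$ for every $n\in\Z_+$. Therefore $\{T^nx:n\in\Z_+\}\subseteq M^T_x$, and since $M^T_x$ is closed we can pass to the closure to conclude $F^T_x=\overline{\{T^nx:n\in\Z_+\}}\subseteq M^T_x$.

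The two inclusions together yield $M^T_x=F^T_x$. There is no real obstacle here: the only minor bookkeeping point is the use of symmetry of the neighborhood filter at $0$ in the first inclusion, and the whole argument is essentially a formal consequence of Lemma~\ref{mx1} together with the hypothesis $x\in M^T_x$.
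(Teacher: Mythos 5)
Your proof is correct and follows essentially the same route as the paper: the inclusion $F^T_x\subseteq M^T_x$ is obtained exactly as in the text from $x\in M^T_x$, the $T$-invariance and the closedness of $M^T_x$ given by Lemma~\ref{mx1}, while the inclusion $M^T_x\subseteq F^T_x$ is the step the paper dismisses as ``clear'' and which you simply spell out. Nothing is missing.
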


\begin{proof} Clearly $M^T_x\subseteq F^T_x$. Since $x\in M^T_x$ and
$T(M^T_x)\subseteq M^T_x$, we see that $\{T^nx:n\in\Z_+\}\subseteq
M^T_x$. Since $M^T_x$ is closed, we have $F^T_x\subseteq M^T_x$.
Thus, $F^T_x=M^T_x$.
\end{proof}

\begin{lemma} \label{mx7} If $x\in X$ is such that $M^T_x$ has
non-empty interior, then there exists $y\in X$  such that $y\in
M^T_y=M^T_x$. \end{lemma}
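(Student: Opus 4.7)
My plan exploits the identity $M^T_{Tx}=M^T_x$ from Lemma~\ref{mx1}. Iterated, it tells us that every forward iterate of $x$ has exactly the same associated set $M^T$; so if we can locate any iterate $T^{n_0}x$ of $x$ inside the interior of $M^T_x$, then $y:=T^{n_0}x$ automatically satisfies $M^T_y=M^T_x$, and membership $y\in M^T_y$ becomes a free consequence of $y$ lying in $M^T_x$.

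To find such an iterate, I would choose a non-empty open set $W\subseteq M^T_x$ and pick any $w\in W$. Since $\U$ is a base of neighbourhoods of $0$, there is $U\in\U$ with $w-U\subseteq W$. The defining property of $M^T_x$, applied to $w\in M^T_x$, then says that $\{n\in\Z_+:w-T^nx\in U\}$ has positive upper density, hence is in particular infinite. Any $n_0$ in this set satisfies $T^{n_0}x\in w-U\subseteq W$.

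Setting $y=T^{n_0}x$, the equality $M^T_y=M^T_x$ follows from $n_0$ applications of Lemma~\ref{mx1}, and $y\in W\subseteq M^T_x=M^T_y$ gives the required recurrence $y\in M^T_y$. The substantive work is already done by Lemma~\ref{mx1}, so I do not anticipate any genuine obstacle; the only point requiring care is converting the open neighbourhood $W$ of $w$ into a member of the fixed base $\U$ by a translation, so that the definition of $M^T_x$ can be applied directly to produce the entry time $n_0$.
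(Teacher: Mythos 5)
Your argument is correct and is essentially the same as the paper's: take $y=T^{n_0}x$ for an iterate landing in the interior of $M^T_x$ and invoke $M^T_{Tx}=M^T_x$ from Lemma~\ref{mx1}. The only difference is that you spell out why such an iterate exists (using the definition of $M^T_x$ at a point $w$ of the interior together with a basic neighbourhood $U$ with $w-U$ inside the interior), a step the paper leaves implicit.
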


\begin{proof} Since $M^T_x$ has non-empty interior, there is $k\in
\Z_+$ such that $T^k x$ is an interior point of $M^T_x$. In
particular, $y=T^kx\in M^T_x$. By Lemma~\ref{mx1},
$M^T_y=M^T_{T^kx}=M^T_x$. Thus $y\in M^T_x=M^T_y$.
\end{proof}

\subsection{Rotations}

\begin{proposition}\label{coc2} Let $X$ be a
complex topological vector space, $z\in\T$ and $T\in L(X)$. Then $T$
is $\uu$-frequently hypercyclic if and only if $zT$ is
$\uu$-frequently hypercyclic and $\uu(T)=\uu(zT)$.
\end{proposition}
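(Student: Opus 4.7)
The plan is to establish the sharper identity $\uu(T)=\uu(zT)$; the biconditional claimed in the proposition then follows because $\uu$-frequent hypercyclicity of an operator just means that the corresponding set of vectors is nonempty. Since $z^{-1}\in\T$ and $T=z^{-1}(zT)$, by symmetry it suffices to prove a single inclusion, say $\uu(T)\subseteq\uu(zT)$.

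Fix $x\in\uu(T)$, equivalently $M^T_x=X$. The first observation is the routine inclusion $M^T_x\subseteq N^T_x$: for any $y\in M^T_x$ and $U\in\U$, pick $V\in\U$ with $V\subseteq -U$; the set $\{n:y-T^nx\in V\}\in\Lambda$ then witnesses $\{n:C_n(T,x)\cap(y+U)\neq\varnothing\}\in\Lambda$ via the point $T^nx\in C_n(T,x)$. Hence $N^T_x=X$, and Lemma~\ref{mx1} upgrades this immediately to $N^{zT}_x=N^T_x=X$. In particular $x\in N^{zT}_x$, so Lemma~\ref{mx5} applied to the operator $zT$ yields $x\in M^{zT}_x$, which Lemma~\ref{mx6} then rewrites as $M^{zT}_x=F^{zT}_x$.

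To finish, note that $M^T_x=X$ trivially forces $F^T_x=X$, so $x$ is a hypercyclic vector for $T$; Theorem~B now guarantees that $x$ is also a hypercyclic vector for $zT$, whence $F^{zT}_x=X$. Combining the two identities yields $M^{zT}_x=X$, that is $x\in\uu(zT)$, and the reverse inclusion is obtained by running the same argument with $zT$ and $z^{-1}$ in place of $T$ and $z$. The main obstacle is transferring frequent-visit data from $T$ to $zT$: the twisted orbit $(zT)^nx=z^nT^nx$ carries an extra rotation $z^n$ that $M^{zT}_x$ is sensitive to but that $M^T_x$ ignores. This rotation ambiguity is precisely what the set $N^T_x$ absorbs through its invariance $N^{zT}_x=N^T_x$, after which Lemmas~\ref{mx5} and~\ref{mx6} together with Theorem~B convert the circle-level density back into the required pointwise density.
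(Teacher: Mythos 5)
Your argument is correct and follows the paper's own proof essentially step for step: reduce to one inclusion by symmetry, pass from $M^T_x=X$ to $N^{zT}_x=N^T_x=X$, then apply Lemma~\ref{mx5}, Lemma~\ref{mx6} and Theorem~B to conclude $M^{zT}_x=F^{zT}_x=X$. The only difference is that you spell out the inclusion $M^T_x\subseteq N^T_x$, which the paper treats as immediate.
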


\begin{proof} Let $x\in\uu(T)$. Then $M^T_x=N^T_x=X$, where the sets
$M^T_x$ and $N^T_x$ are defined in (\ref{mx}) and (\ref{nx}). On the
other hand, clearly $N^{zT}_x=N^T_x$. Hence $N^{zT}_x=X$. In
particular, $x\in N^{zT}_x$. By Lemma~\ref{mx5}, $x\in M^{zT}_x$.
Then by Lemma~\ref{mx6}, $M^{zT}_x=F^{zT}_x$, where $F^{zT}_x$  is
the closure of $\{(zT)^nx:n\in\Z_+\}$. Since $x$ is a hypercyclic
vector for $T$, Theorem~B implies that $x$ is a hypercyclic vector
for $zT$ and therefore $F^{zT}_x=X$. Hence $M^{zT}_x=X$ and
$x\in\uu(zT)$. Thus $\uu(T)\subseteq \uu(zT)$. Applying this
inclusion to $T$ replaced by $zT$ and $z$ replaced by $z^{-1}$, we
obtain $\uu(zT)\subseteq \uu(T)$. Thus $\uu(T)=\uu(zT)$.
\end{proof}

\subsection{Powers}

\begin{proposition}\label{coc3} Let $X$ be a
topological vector space and $T\in L(X)$. If $x\in X$ is such that
the set $M^T_x$, defined in $(\ref{mx})$, has non-empty interior,
then $x\in\uu(T)$.
\end{proposition}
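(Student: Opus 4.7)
The plan is to reduce the statement to the Bourdon--Feldman Theorem~A via the two bridging Lemmas~\ref{mx6} and~\ref{mx7}. Recalling the remark just after Definition~2, the conclusion $x\in\uu(T)$ is equivalent to $M^T_x=X$, so it suffices to upgrade ``$M^T_x$ has non-empty interior'' to ``$M^T_x=X$''.

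Concretely, I would first apply Lemma~\ref{mx7} to produce $y\in X$ with $y\in M^T_y=M^T_x$. The self-recurrence $y\in M^T_y$ is exactly the hypothesis of Lemma~\ref{mx6} (with $y$ in place of $x$), which then gives $M^T_y=F^T_y$, where $F^T_y=\overline{\{T^ny:n\in\Z_+\}}$. Chaining these equalities yields $F^T_y=M^T_x$, so $F^T_y$ inherits the non-empty interior of $M^T_x$. Theorem~A now forces $F^T_y=X$, and unwinding the chain gives $M^T_x=X$, hence $x\in\uu(T)$ as desired.

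There is essentially no obstacle at this stage, since all the real work has been absorbed into the preceding lemmas. What is worth flagging is \emph{why} this particular three-step chain is the natural route: Theorem~A requires an orbit closure, not an abstract $M^T$-set, so Lemma~\ref{mx7} must be invoked first to locate a point $y=T^kx$ lying in the interior of its own $M^T$-set (using $M^T_{T^kx}=M^T_x$ from Lemma~\ref{mx1}), and only then does Lemma~\ref{mx6} convert that $M^T$-set into the orbit closure on which Theorem~A can act.
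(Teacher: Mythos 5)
Your proposal is correct and follows exactly the paper's own argument: Lemma~\ref{mx7} to find $y$ with $y\in M^T_y=M^T_x$, Lemma~\ref{mx6} to identify $M^T_y$ with the orbit closure $F^T_y$, and Theorem~A to conclude $F^T_y=X$, hence $M^T_x=X$ and $x\in\uu(T)$. No differences worth noting.
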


\begin{proof} By Lemma~\ref{mx7}, we can pick $y\in X$ such that
$y\in M^T_y=M^T_x$. By Lemma~\ref{mx6}, $M^T_y=F^T_y$, where
$F^{T}_y$ is the closure of $\{T^ny:n\in\Z_+\}$. Thus, $F^T_y$ has
non-empty interior and Theorem~A implies that $F^T_y=X$. Since
$F^T_y=M^T_y=M^T_x$, we have $M^T_x=X$. That is, $x\in\uu(T)$.
\end{proof}

\begin{corollary} \label{pow} Let $X$ be a
topological vector space, $T\in L(X)$ be an $\uu$-frequently
hypercyclic operator and $n\in\N$. Then $T^n$ is also
$\uu$-frequently hypercyclic. Moreover $\uu(T)=\uu(T^n)$.
\end{corollary}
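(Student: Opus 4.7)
The easy inclusion $\uu(T^n) \subseteq \uu(T)$ is immediate: if $y \in \uu(T^n)$ and $U \subseteq X$ is a non-empty open set, then $A = \{k \in \Z_+ : T^{nk} y \in U\}$ has positive upper density $\alpha$, and since $\{m \in \Z_+ : T^m y \in U\} \supseteq nA$ with $\ud(nA) = \alpha/n > 0$, we get $y \in \uu(T)$. The plan for the harder inclusion $\uu(T) \subseteq \uu(T^n)$ is a three-step argument built on the machinery of this section.

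Fix $y \in \uu(T)$, so $M^T_y = X$. \emph{Step A (residue covering).} I would first show $X = \bigcup_{j=0}^{n-1} M^{T^n}_{T^j y}$. Given $w \in X$ and $U \in \U$, the set $\{k \in \Z_+ : w - T^k y \in U\}$ has positive upper density, and partitioning by residue modulo $n$ together with subadditivity of $\ud$ forces at least one $j \in \{0,\ldots,n-1\}$ to satisfy $\ud\{m : w - (T^n)^m T^j y \in U\} > 0$. As $U$ runs through a filter base of $0$-neighborhoods, the sets $J(w, U) = \{j : \text{the above density condition holds for } U\}$ form a monotone decreasing family of non-empty subsets of the finite set $\{0,\ldots,n-1\}$, so they have non-empty total intersection by the finite intersection property; any $j$ in this intersection witnesses $w \in M^{T^n}_{T^j y}$.

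\emph{Step B (Baire and Proposition~\ref{coc3}).} Each $M^{T^n}_{T^j y}$ is closed by Lemma~\ref{mx1} applied to $T^n$, so in a Baire space $X$ (as in the Banach-space applications the paper targets) one of them, say $M^{T^n}_{T^{j_0} y}$, has non-empty interior. Proposition~\ref{coc3} applied to $T^n$ and $T^{j_0} y$ then upgrades this to $M^{T^n}_{T^{j_0} y} = X$, i.e., $T^{j_0} y \in \uu(T^n)$. This already shows that $T^n$ is $\uu$-frequently hypercyclic.

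\emph{Step C ($T$-invariance of $\uu(T^n)$).} To pull $y$ itself into $\uu(T^n)$, I would show that $\uu(T^n)$ is $T$-invariant. For $z \in \uu(T^n)$ and $w = Tw' \in T(X)$, continuity of $T$ makes $V = T^{-1}(U)$ a $0$-neighborhood for any $U \in \U$, and
\[
\{k : w - T^{nk+1} z \in U\} = \{k : T(w' - T^{nk} z) \in U\} \supseteq \{k : w' - T^{nk} z \in V\}
\]
has positive upper density because $w' \in M^{T^n}_z = X$; thus $T(X) \subseteq M^{T^n}_{Tz}$, and since $z$ is hypercyclic for $T$ the tail orbit $\{T^k z : k \geq 1\} \subseteq T(X)$ is dense, forcing the closed set $M^{T^n}_{Tz}$ to equal $X$. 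Iterating $n - j_0$ times on $T^{j_0} y \in \uu(T^n)$ gives $T^n y = T^{n-j_0}(T^{j_0} y) \in \uu(T^n)$, and since $M^{T^n}_{T^n y} = M^{T^n}_y$ by Lemma~\ref{mx1} for $T^n$, we conclude $y \in \uu(T^n)$. I expect Step C to be the main obstacle: Steps A and B are routine residue partitioning and Baire category, but without the $T$-invariance the category argument only produces a $\uu$-frequently hypercyclic vector in the forward $T$-orbit of $y$, not $y$ itself, and rotating back relies crucially on both continuity of $T$ and the density of $T(X)$ that accompanies hypercyclicity.
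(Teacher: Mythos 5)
Your proof is correct and follows essentially the same route as the paper: the residue decomposition $X=\bigcup_{j=0}^{n-1}M^{T^n}_{T^jy}$, closedness of these sets from Lemma~\ref{mx1}, Proposition~\ref{coc3}, and a dense-range argument to pull the $\uu$-frequently hypercyclic vector back from $T^{j_0}y$ to $y$ (the paper phrases your Step~C as the general fact that $R\,\uu(S)\subseteq\uu(S)$ whenever $R$ has dense range and commutes with $S$). The only point to adjust is that Step~B does not actually need $X$ to be a Baire space: a finite union of closed sets with empty interior has empty interior in any topological space, so one of the sets $M^{T^n}_{T^jy}$ automatically has non-empty interior and the argument covers arbitrary topological vector spaces as the statement requires.
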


\begin{proof} Let $x\in \uu(T)$. Then the set $M^T_x$ defined in (\ref{mx})
coincides with $X$. Since the union of finitely many subsets of
$\Z_+$ has positive upper density if and only if at least one of
them has positive upper density, one can easily see that
$$
X=M^T_x=\bigcup_{j=0}^{n-1}M^{T^n}_{T^jx}.
$$
By Lemma~\ref{mx1}, the sets $M^{T^n}_{T^jx}$ are closed. Hence
there is $j\in \{0,\dots,n-1\}$ such that $M^{T^n}_{T^jx}$ has
non-empty interior. By Proposition~\ref{coc3}, $M^{T^n}_{T^jx}=X$.
That is, $T^jx\in\uu(T^n)$. Exactly as for usual hypercyclicity, one
can easily verify that if $y$ is an $\uu$-frequently hypercyclic
vector for $S\in L(X)$ and $R\in L(X)$ has dense range and commutes
with $S$, then $Ry\in\uu(S)$. Applying this observation with
$S=T^n$, $y=T^jx$ and $R=T^{n-j}$, we see that $T^nx\in\uu(T^n)$.
Since $x$ and $T^nx$ have the same orbits with respect to $T^n$ (up
to one element added or removed), we obtain $x\in\uu(T^n)$. Thus
$\uu(T)\subseteq \uu(T^n)$. The inclusion $\uu(T^n)\subseteq \uu(T)$
is obvious. \end{proof}

\begin{corollary} \label{min} Let $X$ be a
real topological vector space and $T\in L(X)$. Then $T$ is
$\uu$-frequently hypercyclic if and only if $-T$ is $\uu$-frequently
hypercyclic and $\uu(T)=\uu(-T)$.
\end{corollary}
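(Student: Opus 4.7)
The plan is to reduce this immediately to Corollary \ref{pow} applied to the exponent $n=2$, exploiting the algebraic identity $(-T)^2 = T^2$. Unlike the complex case (Proposition \ref{coc2}), we cannot rotate by a continuous parameter, but in the real setting the only nontrivial scalar on the unit circle is $-1$, and squaring eliminates the sign.

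Concretely, I would argue as follows. By Corollary \ref{pow} applied with $n=2$ to the operator $T$, we have the set equality $\uu(T) = \uu(T^2)$. Applying the same corollary with $n=2$ to the operator $-T$, we obtain $\uu(-T) = \uu((-T)^2)$. Since $(-T)^2 = T^2$, the two right-hand sides coincide, and chaining the equalities yields $\uu(T) = \uu(T^2) = \uu(-T)$. The biconditional in the statement (that $T$ is $\uu$-frequently hypercyclic iff $-T$ is) is then just the observation that $\uu(T) \neq \varnothing$ iff $\uu(-T) \neq \varnothing$.

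There is essentially no obstacle here beyond noting that Corollary \ref{pow} gives a genuine set equality (including when both sides are empty), so the chain of equalities is valid without a preliminary assumption that either $T$ or $-T$ is $\uu$-frequently hypercyclic. The work has already been done in setting up Proposition \ref{coc3} and Corollary \ref{pow}; the present corollary is a one-line consequence.
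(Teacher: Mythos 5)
Your proof is correct and is essentially identical to the paper's own one-line argument: both apply Corollary~\ref{pow} with $n=2$ and use $(-T)^2=T^2$ to get $\uu(T)=\uu(T^2)=\uu((-T)^2)=\uu(-T)$. Your extra remark about the empty case is a reasonable (and slightly more careful) gloss, since Corollary~\ref{pow} is formally stated only for $\uu$-frequently hypercyclic operators, but it does not change the substance of the argument.
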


\begin{proof} By Corollary~\ref{pow},
$\uu(T)=\uu(T^2)=\uu((-T)^2)=\uu(-T)$.
\end{proof}

\section{Proof of Theorems~\ref{hihi} and~\ref{spec}}

The above results allow us to strengthen Theorem~\ref{main}.

\begin{corollary} \label{tstar1} Let $X$ be a separable infinite
dimensional Banach space, $z\in\K$ and $T\in L(X)$ be such that
$|z|=1$ and there exists $f\in X^*\setminus\{0\}$ for which
$\limm_{n\to\infty}\|T^{*n}f\|^{1/n}=0$. Then $zI+T$ is not
$\uu$-frequently hypercyclic.
\end{corollary}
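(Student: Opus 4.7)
The plan is to reduce Corollary~\ref{tstar1} to the already-established Corollary~\ref{tstar} by exploiting the rotation- and sign-invariance of $\uu$-frequent hypercyclicity proved in Section~\ref{secrot}. The key algebraic observation is the factorization
$$
zI+T=z(I+z^{-1}T),
$$
which expresses $zI+T$ as a unimodular multiple of an operator of the form $I+S$ with $S=z^{-1}T$. Since $|z|=1$, we have $\|S^{*n}f\|^{1/n}=|z|^{-1}\|T^{*n}f\|^{1/n}$, which still converges to $0$, so $S$ inherits the hypothesis of Corollary~\ref{tstar}.

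In the complex case I would first apply Corollary~\ref{tstar} to $S=z^{-1}T$ to obtain that $I+z^{-1}T$ is not $\uu$-frequently hypercyclic. Then Proposition~\ref{coc2}, applied with the unimodular scalar $z\in\T$, shows that $\uu$-frequent hypercyclicity is invariant under multiplication by $z$. Hence $z(I+z^{-1}T)=zI+T$ cannot be $\uu$-frequently hypercyclic either.

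In the real case only $z\in\{-1,1\}$ needs to be treated. For $z=1$ the claim is exactly Corollary~\ref{tstar}. For $z=-1$, I would write $-I+T=-(I+(-T))$. The operator $-T$ satisfies the same decay hypothesis as $T$, so Corollary~\ref{tstar} gives that $I+(-T)$ is not $\uu$-frequently hypercyclic, and Corollary~\ref{min} then transfers this conclusion to its negative $-I+T$.

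The argument is essentially a bookkeeping exercise combining Corollary~\ref{tstar} with the rotation and power invariance results of Section~\ref{secrot}; there is no substantive obstacle. The only point requiring care is to verify that the spectral decay condition $\|T^{*n}f\|^{1/n}\to 0$ is stable under multiplying $T$ by a unimodular scalar, which is immediate from $|z|=1$ and the homogeneity of the norm.
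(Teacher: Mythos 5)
Your proof is correct and is essentially identical to the paper's own argument: both factor $zI+T=z(I+z^{-1}T)$, apply Corollary~\ref{tstar} to $z^{-1}T$, and then invoke Proposition~\ref{coc2} in the complex case and Corollary~\ref{min} in the real case. The only difference is that you spell out the real case $z=\pm1$ and the invariance of the decay hypothesis explicitly, which the paper leaves implicit.
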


\begin{proof} By Corollary~\ref{tstar}, $I+z^{-1}T$ is not
$\uu$-frequently hypercyclic. Using Proposition~\ref{coc2} in the
case $\K=\C$ and Corollary~\ref{min} in the case $\K=\R$, we see
that $z(I+z^{-1}T)=zI+T$ is not $\uu$-frequently hypercyclic.
\end{proof}

Since $\|T^{*n}f\|\leq \|f\|\|T^{*n}\|=\|f\|\|T^n\|$ for any $f\in
X^*$ and any $n\in\Z_+$, we immediately obtain the following
slightly stronger form of Theorem~\ref{main}.

\begin{theorem}\label{main2} Let $T$ be a bounded linear operator on
a separable infinite dimensional Banach space $X$ such that
$\limm_{n\to \infty}\|T^n\|^{1/n}=0$ and $z\in\K$ be such that
$|z|=1$. Then $zI+T$ is not $\uu$-frequently hypercyclic. In
particular, $zI+T$ is not frequently hypercyclic.
\end{theorem}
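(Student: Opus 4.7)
The plan is to derive Theorem~\ref{main2} as a direct consequence of Corollary~\ref{tstar1}, which already packages exactly the rotation-invariance we need. The content of Theorem~\ref{main2} strengthens Theorem~\ref{main} by allowing an arbitrary unimodular scalar $z$ in place of $1$, and all the conceptual work behind this strengthening has already been carried out in Section~\ref{secrot}: Proposition~\ref{coc2} (for $\K=\C$) and Corollary~\ref{min} (for $\K=\R$) show that $\uu$-frequent hypercyclicity is preserved under multiplication by scalars of modulus one, and Corollary~\ref{tstar1} combines this with Corollary~\ref{tstar} in exactly the required form.

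To execute the plan, first I would pick any $f\in X^*\setminus\{0\}$; such an $f$ exists by Hahn--Banach since $X$ is a nontrivial Banach space. Next I would verify the hypothesis of Corollary~\ref{tstar1}, namely that $\lim_{n\to\infty}\|T^{*n}f\|^{1/n}=0$. This reduces to the routine submultiplicative estimate
\[
\|T^{*n}f\|\leq \|T^{*n}\|\,\|f\|=\|T^n\|\,\|f\|,
\]
which, combined with the assumption $\lim_{n\to\infty}\|T^n\|^{1/n}=0$, immediately yields $\|T^{*n}f\|^{1/n}\to 0$. Applying Corollary~\ref{tstar1} with this $f$ and the given $z$ then concludes that $zI+T$ is not $\uu$-frequently hypercyclic. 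The "in particular" clause is automatic, since every frequently hypercyclic operator is $\uu$-frequently hypercyclic by definition.

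I do not expect any genuine obstacle in this passage: the delicate ingredients — the entire-function-of-exponential-type-zero argument in Theorem~\ref{main1}, the reduction from complex to real scalars for functionals in Corollary~\ref{tstar}, and the rotation invariance of $\uu$-frequent hypercyclicity supplied by Proposition~\ref{coc2} and Corollary~\ref{min} — are all already established earlier in the paper. Theorem~\ref{main2} is therefore essentially a repackaging of Corollary~\ref{tstar1} via a one-line dual-norm bound, and the hardest interpretive step is simply to notice that replacing $\|T^{*n}\|$ by $\|T^n\|$ in the hypothesis of Corollary~\ref{tstar1} loses nothing while producing a statement phrased purely in terms of $T$ itself.
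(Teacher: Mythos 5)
Your proposal is correct and coincides with the paper's own derivation: Theorem~\ref{main2} is obtained immediately from Corollary~\ref{tstar1} by choosing any nonzero $f\in X^*$ and using the estimate $\|T^{*n}f\|\leq\|f\|\|T^{*n}\|=\|f\|\|T^n\|$ to verify its hypothesis. Nothing further is needed.
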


In order to prove Theorems~\ref{hihi} and~\ref{spec}, we need the
following lemmas.

\begin{lemma} \label{HI0} Let $X\in\Phi_\K$ and $T\in L(X)$. Assume also
that $T$ has no non-trivial closed invariant subspaces of finite
codimension. Then $T=zI+S$, where $z\in \K$ and
$\limm_{n\to\infty}\|S^n\|^{1/n}=0$.
\end{lemma}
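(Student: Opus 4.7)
The plan is to exploit the hypothesis $X\in\Phi_\K$ to write $T=zI+S$ with $z\in\K$ and $S\in{\cal S}(X)$, which is immediate from the characterization of $\Phi_\K$ recalled in the introduction, and then to show that the assumption on $T$ forces the spectral radius $r(S)=\lim_{n\to\infty}\|S^n\|^{1/n}$ to equal $0$.

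The key classical ingredient I would invoke is that every strictly singular operator is a Riesz operator. Consequently, the complex spectrum of $S$ (computed in the complexification $X_\C$ when $\K=\R$) either equals $\{0\}$ or consists of $0$ together with a sequence of eigenvalues of finite algebraic multiplicity accumulating only at $0$, and for each such nonzero eigenvalue $\mu$ the operator $S-\mu I$ (on $X_\C$) is Fredholm of index $0$.

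I would then argue by contradiction: assume there exists $\mu\in\sigma_\C(S)\setminus\{0\}$ and construct a proper closed $T$-invariant subspace of $X$ of finite codimension. In the easy case---$\K=\C$, or $\K=\R$ with $\mu\in\R$---I take $R=S-\mu I=T-(z+\mu)I$: this is Fredholm of index $0$ with $\ker R\ne\{0\}$, hence its range is a proper closed subspace of $X$ of finite codimension, and it is $T$-invariant since $T$ commutes with $R$. In the remaining case ($\K=\R$, $\mu=\alpha+i\beta$ with $\beta\ne 0$) I instead take the real operator $R=(S-\mu I)(S-\bar\mu I)=S^2-2\alpha S+|\mu|^2I$, which is a real polynomial in $S$: its complexification is a composition of two Fredholm index-$0$ operators on $X_\C$, so $R$ itself is Fredholm of index $0$ on $X$; the complex kernel of $R_\C$ contains the $\mu$-eigenspace of $S_\C$ and is therefore nonzero, and because $R$ has real coefficients the real kernel $\ker R$ in $X$ is also nonzero. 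Hence the range of $R$ is again a proper closed $T$-invariant subspace of $X$ of finite codimension, contradicting the hypothesis on $T$.

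The main obstacle is this real-with-non-real-eigenvalue subcase, where the natural Fredholm-theoretic argument lives on the complexification and one has to verify that both the nontriviality of the kernel and the Fredholm index-$0$ property descend from $X_\C$ back to $X$, so that the proper invariant subspace actually sits inside $X$ itself. The rest is a routine combination of the $\Phi_\K$ characterization of $T$ with Riesz theory for strictly singular operators.
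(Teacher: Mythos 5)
Your argument is correct, and it reaches the conclusion by a genuinely different mechanism than the paper, even though both run on the same Riesz-theoretic engine (a strictly singular operator is a Riesz operator, so every nonzero point of its complex spectrum is an eigenvalue of finite algebraic multiplicity at which the operator is Fredholm of index $0$). The paper dualizes: it observes that the absence of non-trivial closed invariant subspaces of finite codimension is equivalent to $\sigma_p(T_\C^*)=\varnothing$, and then notes that a nonzero spectral point $w-z$ of $S_\C$ is a normal eigenvalue, hence lies in $\sigma_p(S_\C^*)$, giving $w\in\sigma_p(T_\C^*)$ --- a contradiction in one line, with the real case absorbed entirely into the complexification. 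You instead stay on the primal side and exhibit the forbidden subspace explicitly as the range of a Fredholm index-$0$ operator with nontrivial kernel ($S-\mu I$, or the real quadratic $(S-\mu I)(S-\bar\mu I)$ when $\K=\R$ and $\mu\notin\R$), which is closed, of finite positive codimension, and $T$-invariant because it is the range of a polynomial in $T$. What the paper's route buys is brevity --- no case split, and no need to check that Fredholmness and nontriviality of the kernel descend from $X_\C$ to $X$; what your route buys is that the invariant subspace is produced concretely inside $X$ rather than read off from an adjoint eigenvector, and it sidesteps the (true but not entirely free) equivalence between finite-codimensional invariant subspaces of $T$ and the point spectrum of $T_\C^*$. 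Both proofs finish identically: $\sigma(S_\C)=\{0\}$, the spectral radius formula, and $\|S^n\|\leq\|S_\C^n\|$.
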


\begin{proof} Since $X\in\Phi_\K$, there are $z\in\K$ and $S\in {\cal
S}(X)$ such that $T=zI+S$. Let $X_\C=X$ and $R_\C=R$ for any $R\in
L(X)$ if $\K=\C$ and $X_\C$ be the complexification of $X$ and
$R_\C\in L(X_\C)$ be the complexification (=the unique complex
linear extension) of $R\in L(X)$ if $\K=\R$.

In any case $T_\C=zI+S_\C$ and $S_\C\in{\cal S}(X)$. The fact that
$T$ has no non-trivial closed linear subspaces of finite codimension
is clearly equivalent to the statement that the point spectrum
$\sigma_p(T_\C^*)$ of the dual $T_\C^*$ of $T_\C$ is empty.

Assume that $w\in \sigma(T_\C)\setminus\{z\}$. Then $w-z\in
\sigma(S_\C)\setminus\{0\}$. Since a non-zero element of the
spectrum of any strictly singular operator on a complex Banach space
is a normal eigenvalue \cite{pich}, we see that $w-z$ is a normal
eigenvalue of $S_\C$ and therefore $w-z\in \sigma_p(S_\C^*)$. Hence
$w\in\sigma_p(T_\C^*)$, which contradicts the equality
$\sigma_p(T_\C^*)=\varnothing$. Thus $\sigma(T_\C)=\{z\}$. Hence
$\sigma(S_\C)=\{0\}$. By the spectral radius formula
$\limm_{n\to\infty}\|S_\C^n\|^{1/n}=0$. Since $S$ is the restriction
of $S_\C$ to an $\R$-linear invariant subspace, $\|S^n\|\leq
\|S_\C^n\|$ for any $n\in\N$ and therefore
$\limm_{n\to\infty}\|S^n\|^{1/n}=0$.
\end{proof}

\begin{lemma} \label{HI} Let $X\in\Phi_\K$ and $T\in L(X)$ be a
hypercyclic operator. Then $T=zI+S$, where $z\in \K$, $|z|=1$ and
$\limm_{n\to\infty}\|S^n\|^{1/n}=0$.
\end{lemma}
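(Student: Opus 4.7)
The plan is to deduce the decomposition $T = zI + S$ from Lemma~\ref{HI0} and then to use hypercyclicity once more to force $|z| = 1$. The first step is immediate: by the result of \cite{joh}, already invoked in the paper, a hypercyclic operator on a topological vector space has no non-trivial closed invariant subspace of finite codimension, so $T$ satisfies the hypothesis of Lemma~\ref{HI0}. This yields $T = zI + S$ with $z \in \K$ and $\lim_{n\to\infty}\|S^n\|^{1/n}=0$; moreover, the proof of Lemma~\ref{HI0} actually establishes $\sigma(T_\C) = \{z\}$, so the spectral radius of $T$ equals $|z|$.

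The remaining task is to rule out $|z| \neq 1$. If $|z| < 1$, the spectral radius formula yields $\|T^n\| \to 0$, so every orbit of $T$ tends to $0$ and cannot be dense in $X$. If $|z| > 1$, then $0 \notin \sigma(T_\C)$, so $T$ is invertible in $L(X)$, and the complexification of $T^{-1}$ has spectrum $\{1/z\}$, whence $\|T^{-n}\| \to 0$. For any hypercyclic vector $x$ for $T$, density of the orbit of $x$ produces a sequence $n_k \to \infty$ with $T^{n_k}x \to 0$, and then
$$
\|x\| \;=\; \|T^{-n_k}(T^{n_k}x)\| \;\leq\; \|T^{-n_k}\|\,\|T^{n_k}x\| \;\longrightarrow\; 0,
$$
forcing $x = 0$ and contradicting hypercyclicity. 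Hence $|z| = 1$, as required.

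The main obstacle is the case $|z|>1$: unlike the sub-unit case, the spectral radius of $T$ itself does not immediately preclude hypercyclicity, and one has to invert $T$ and combine the decay of $\|T^{-n}\|$ with the fact that a hypercyclic orbit comes arbitrarily close to the origin. Everything else is essentially a matter of reading off consequences of Lemma~\ref{HI0} and the spectral radius formula.
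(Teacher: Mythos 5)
Your proposal is correct and follows essentially the same route as the paper: apply Wengenroth's result to verify the hypothesis of Lemma~\ref{HI0}, obtain $T=zI+S$ with $S$ quasinilpotent, and then rule out $|z|\neq 1$ by norm growth/decay. The only (harmless) variation is in the case $|z|>1$: the paper establishes the pointwise limit $\lim_n\|T^nx\|^{1/n}=|z|$ and concludes that every non-zero orbit escapes to infinity, whereas you invert $T$ and play the decay of $\|T^{-n}\|$ against the fact that a dense orbit returns near the origin; both are valid and of comparable length.
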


\begin{proof} As it is shown in \cite{joh}, a hypercyclic operator
on a real or complex topological vector space has no non-trivial
closed invariant subspaces of finite codimension. Hence, by
Lemma~\ref{HI0}, $T=zI+S$, where $z\in \K$ and $\limm_{n
\to\infty}\|S^n\|^{1/n}=0$. It remains to demonstrate that $|z|=1$.
Using the equalities $T=zI+S$ and $\limm_{n
\to\infty}\|S^n\|^{1/n}=0$, one can easily verify that
\begin{equation}\label{ee1}
\lim_{n\to\infty}\|T^nx\|^{1/n}=|z|\ \ \text{for any $x\in
X\setminus\{0\}$.}
\end{equation}

Now if $|z|>1$, then according to (\ref{ee1}), $\|T^nx\|\to \infty$
as $n\to \infty$ for each non-zero $x\in X$. Similarly, if $|z|<1$,
then $\|T^nx\|\to 0$ as $n\to \infty$ for any $x\in X$. Hence $T$
can not be hypercyclic if $|z|\neq 1$. Thus $|z|=1$.
\end{proof}

Theorem~\ref{hihi} follows immediately from Lemma~\ref{HI} and
Theorem~\ref{main2}.

\begin{proof}[Proof of Theorem~\ref{spec}]
Let $T$ be an $\uu$-frequently hypercyclic operator on a complex
Banach space $X$. Assume also that $z$ is an isolated point of the
spectrum $\sigma(T)$. Let $x\in\uu(T)$, $P$ be the spectral
projection corresponding to the component $\{z\}$ of the spectrum of
$T$, $Y=P(X)$ and $S\in L(Y)$ be the restriction of $T$ to the
closed invariant subspace $Y$. Clearly $\sigma(S)=\{z\}$. Since
$S^nPx=PT^nx$ for each $n\in \Z_+$, we see that $Px\in\uu(S)$. Thus,
$S$ is an $\uu$-frequently hypercyclic operator on $Y$ satisfying
$\sigma(S)=\{z\}$. If $|z|<1$, then $\|S^ny\|\to 0$, while
$\|S^ny\|\to \infty$ if $|z|>1$ for any $y\in Y$ and $S$ can not be
hypercyclic. Hence $z\in\T$. Since $\sigma(S)=\{z\}$, we have
$S=zI+R$, where $\limm_{n\to\infty}\|R^n\|^{1/n}=0$. We have
obtained a contradiction with Theorem~\ref{main2}. \end{proof}

\section{Concluding remarks}

{\bf 1.} \ In the proofs of Proposition~\ref{coc2} and
Corollary~\ref{pow}, we use the following property of the family
$\Lambda$ of subsets of $\Z_+$ of positive upper density. Namely, if
a finite union of sets belongs to $\Lambda$, then one of them does.
The family $\Lambda_0$ of subsets of $\Z_+$ of positive lower
density fails to have this property. Thus our proof of stability of
the class of $\uu$-frequently hypercyclic operators under powers and
under multiplication by unimodular scalars does not work for the
class of frequently hypercyclic operators. This leads to the
following question.

\smallskip

{\bf Question 2.} \ Let $T$ be a frequently hypercyclic operator on
a separable infinite dimensional Banach space $X$. Is it true that
the operators $T^n$ and $zT$ for $n\in\N$ and $z\in\K$  with $|z|=1$
are frequently hypercyclic?

\smallskip

\noindent{\bf 2.} \ Proposition~\ref{coc2}, Corollary~\ref{pow} and
Corollary~\ref{min} admit the following generalization. Let $\Omega$
be any family of non-empty subsets of $\Z_+$ satisfying the
following conditions
\begin{align*}
&\text{if $A\in\Omega$ and $A\subseteq B\subseteq \Z_+$, then
$B\in\Omega$};
\\
&\text{if $A\in\Omega$ and $k\in\Z_+$, then $A+k\in\Omega$ and
$(A-k)\cap \Z_+\in \Omega$};
\\
\label{a3} &\!\!\!\begin{array}{ll}\text{if $k\in\N$ and $A_j$ for
$1\leq j\leq k$ are subsets of $\Z_+$ such that
\smash{$\bigcup\limits_{j=1}^kA_j\in\Omega$}}, \\ \text{then there
exists $j\in\{1,\dots,k\}$ such that $A_j\in \Omega$}.\end{array}
\end{align*}
We say that $x$ is an $\Omega$-{\it hypercyclic} vector for a
continuous linear operator $T$ on a topological vector space $X$ if
$\{n\in\Z_+:T^nx\in U\}\in\Omega$ for each non-empty open subset $U$
of $X$ and that $T$ is $\Omega$-{\it hypercyclic} if it has an
$\Omega$-hypercyclic vector.

All proofs in Section~\ref{secrot} work without any changes (just
replace $\Lambda$ by $\Omega$) for $\Omega$-hypercyclicity instead
of $\uu$-frequent hypercyclicity. Thus, the class of
$\Omega$-hypercyclic operators is stable under taking the powers and
under multiplying by unimodular scalars. Note also that when
$\Omega$ is the set of all infinite subsets of $\Z_+$, then
$\Omega$-hypercyclicity coincides with the usual hypercyclicity. The
following example seems to be interesting. Let $\Psi$ be the set of
all decreasing sequences $\phi=\{\phi_n\}_{n\in\Z_+}$ of positive
numbers such that $\summ_{n=0}^\infty \phi_n=\infty$. Clearly, for
each $\phi\in\Psi$, the family $\Omega_\phi$ of subsets $A$ of
$\Z_+$ for which $\summ_{n\in A}\phi_n=\infty$ satisfies all
properties from the above display. It is also easy to verify that an
operator $T$ is $\uu$-frequently hypercyclic if and only if it is
$\Omega_\phi$-hypercyclic for any $\phi\in\Psi$. This naturally
leads to the following question.

\smallskip

{\bf Question 3.} \ For which $\phi\in\Psi$, there is an
$\Omega_\phi$-hypercyclic operator on any separable infinite
dimensional Banach space?

\smallskip

\noindent{\bf 3.} \ One can observe that all operators $T$ {\it
proven} to be frequently hypercyclic have a stronger property.
Namely they admit a vector $x$ such that for any non-empty open set
$U$ the set $\{n\in\Z_+:T^nx\in U\}$ contains a subset, which has
positive density. Since there are sets $A\subset\Z_+$ of positive
lower density which contain no subsets of positive density, the last
property of $x$ is, formally speaking, stronger than to be a
frequently hypercyclic vector. It might be interesting to find out
whether all frequently hypercyclic operators admit vectors having
this stronger property.

\smallskip

\noindent{\bf 4.} \ Herrero \cite{herre} have characterized the
operator norm closure $\overline{{\tt HC}({\mathcal H})}$ of the set
${\tt HC}({\mathcal H})$ of hypercyclic operators on a separable
infinite dimensional complex Hilbert space $\mathcal H$. Later
Herrero and Wang \cite{herw} have shown that any $T\in
\overline{{\tt HC}({\mathcal H})}$ is a limit of a sequence $T_n$ of
elements of ${\tt HC}({\mathcal H})$ such that the differences
$T-T_n$ are compact. Using these results, one can easily prove the
following proposition.

\begin{proposition} \label{spec1} A nonempty compact subset $K$ of $\C$
is the spectrum of some bounded hypercyclic operator $T$ on a
separable infinite dimensional complex Hilbert space ${\mathcal H}$
if and only if $K\cup \T$ is connected.
\end{proposition}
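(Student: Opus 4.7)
The plan is to handle the two directions separately.

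For \emph{necessity}, suppose $T\in{\tt HC}({\mathcal H})$ with $\sigma(T)=K$. Using Riesz spectral projections, I would rule out any connected component of $K$ lying entirely in $\{|z|<1\}$ or entirely in $\{|z|>1\}$. Indeed, such a component $C$ would yield a nonzero spectral projection $P$ commuting with $T$, and if $C\subseteq\{|z|<1\}$ then $T|_{P({\mathcal H})}$ has spectral radius less than $1$, forcing $\|T^nPx\|\to 0$ for every $x$ and contradicting density of orbits of a hypercyclic vector (since $P$ has nonzero range); the outer case is symmetric via $T^{-1}|_{P({\mathcal H})}$. Thus every component of $K$ meets $\T$, and since $\T$ is connected and meets every component of $K$, the union $K\cup\T$ is connected.

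For \emph{sufficiency}, fix $K$ compact with $K\cup\T$ connected. The strategy is to locate an operator $T_0\in\overline{{\tt HC}({\mathcal H})}$ with $\sigma(T_0)=K$ and then invoke the Herrero--Wang theorem. I would take $T_0$ to be a normal operator of uniform infinite multiplicity on ${\mathcal H}$ with $\sigma(T_0)=K$, for instance multiplication by $z$ on $L^2(K,\mu)$ with $\mu$ a Borel measure of full support in $K$, inflated to a countable orthogonal direct sum. Then $\sigma_e(T_0)=\sigma(T_0)=K$, and Herrero's characterization of $\overline{{\tt HC}({\mathcal H})}$ is satisfied: the Weyl spectrum of $T_0$ equals $K$, whose union with $\T$ is connected by hypothesis; the Fredholm index of $T_0-\lambda I$ vanishes off $K$; and there are no isolated Fredholm points in $\sigma(T_0)$. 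Herrero--Wang then furnishes hypercyclic $T_n$ with $T_n\to T_0$ in norm and $T_n-T_0$ compact. Since $\sigma_e$ is invariant under compact perturbations, $\sigma_e(T_n)=K$, so $\sigma(T_n)\supseteq K$, while upper semicontinuity of the spectrum places any extra isolated eigenvalues of $T_n$ in an arbitrarily small neighborhood of $K$ for $n$ large.

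The main obstacle is arranging $\sigma(T_n)=K$ exactly, rather than merely $\sigma(T_n)\supseteq K$ with possibly a few spurious nearby isolated Fredholm eigenvalues. I would resolve this by exploiting the flexibility of the Herrero--Wang construction, which allows the compact perturbation to be taken small in norm and with additional structural control, permitting one to annihilate the finitely many extraneous eigenvalues. Concretely, one could either refine the construction of $T_0$ in advance so that the approximating $T_n$ automatically satisfy $\sigma(T_n)=K$, or apply a further small compact correction to $T_n$ that removes the extraneous eigenvalues while preserving hypercyclicity, using density of ${\tt HC}({\mathcal H})$ within the affine class of compact perturbations of $T_0$ having prescribed spectrum $K$.
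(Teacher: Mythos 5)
Your necessity argument is essentially the standard one (every component of $\sigma(T)$ must meet $\T$, via Riesz projections and the dichotomy $\|T^nPx\|\to 0$ or $\to\infty$), and it is sound modulo the usual care that a component not meeting $\T$ must first be enlarged to a \emph{clopen} subset of $K$ disjoint from $\T$ before a spectral projection exists; the paper simply cites Herrero for this half. The sufficiency half, however, stalls at exactly the point you flag, and neither of your proposed repairs is an argument: ``exploiting the flexibility of the Herrero--Wang construction'' and ``a further small compact correction that removes the extraneous eigenvalues while preserving hypercyclicity'' are precisely the assertions that would have to be proved, and upper semicontinuity of the spectrum only confines spurious eigenvalues to a neighborhood of $K$, never to $K$ itself. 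The missing idea is that no correction is needed, because hypercyclicity of the perturbed operator kills the spurious spectrum automatically: for $\lambda\notin K=\sigma(T_0)$ the operator $T_0-\lambda I$ is invertible, so $T_n-\lambda I$ is Fredholm of index $0$; if it were not invertible it would have nontrivial cokernel, i.e.\ the (Banach-space) adjoint $T_n^*-\lambda I$ would have nontrivial kernel, which is impossible since the adjoint of a hypercyclic operator has empty point spectrum. Hence $T_n-\lambda I$ is invertible for every $\lambda\notin K$, and combined with $\sigma_e(T_n)=\sigma_e(T_0)=K$ this gives $\sigma(T_n)=K$ exactly. This Fredholm-plus-hypercyclicity step is how the paper itself pins down the spectrum of its compactly perturbed normal piece.

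For comparison, the paper does not run your single-operator scheme: it splits off the set $A$ of isolated points of $K$ (which necessarily lies in $\T$), realizes $\overline{A}$ as the spectrum of the topologically mixing operator $\bigoplus_{z\in A}(zI+S)$ with $S$ a compact backward shift, applies Herrero and Herrero--Wang only to a normal operator with spectrum $K\setminus A$ and empty point spectrum, and finally takes the direct sum, using that a hypercyclic operator direct-summed with a mixing one is hypercyclic. Your variant --- a single normal $T_0$ of uniform infinite multiplicity with $\sigma(T_0)=\sigma_e(T_0)=K$, which does satisfy Herrero's criteria even when $K$ has isolated points (these are then eigenvalues of infinite multiplicity, hence not normal eigenvalues) --- would actually be somewhat cleaner and avoid the splitting, but only once the Fredholm argument above is inserted; as written, the proof is incomplete at its decisive step.
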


\begin{proof} The 'only if' part is proved in \cite{herre}. Let $K$
be a non-empty compact subset of $\C$ such that $K\cup\T$ is
connected. We have to show that there is a hypercyclic operator
$T\in L({\mathcal H})$ such that $\sigma(T)=K$. Let $A$ be the set
of isolated points of $K$. Exactly as in the proof of
Theorem~\ref{spec}, one can easily see that $A\subset \T$. Clearly
$A$ is at most countable. Let $S\in L(\ell_2)$ be a compact weighted
backward shift: $Se_0=0$ and $Se_n=w_ne_{n-1}$ for $n\in\N$, where
$w=\{w_n\}_{n\in\N}$ is a sequence of non-zero complex numbers
tending to $0$ as $n\to\infty$. Consider the $\ell_2$-direct sum
$T_0$ of $zI+S$ for $z\in A$:
$$
T_0=\bigoplus_{z\in A}(zI+S).
$$
Then $\sigma(T_0)=\overline{A}$. From \cite{grish}[Theorem~5.1]  if
follows that $T_0$ is topologically mixing. Next, we take a bounded
normal operator $S_1$ on a separable complex Hilbert space
${\mathcal H}_1$ such that $\sigma(S_1)=K\setminus A$ and $S_1$ has
empty point spectrum. According to the mentioned results of Herrero
\cite{herre} and Herrero and Wang \cite{herw}, we have that $S_1\in
\overline{{\tt HC}({\mathcal H}_1)}$ and therefore there exists a
compact operator $K\in L({\mathcal H}_1)$ such that $T_1=S_1+K$ is
hypercyclic. For each $z\in \C\setminus \sigma(S_1)=\C\setminus
(K\setminus A)$, the operator $S_1-zI$ is invertible and therefore
by the Fredholm theorem, either $T_1-zI$ is invertible or $T_1^*-zI$
has non-trivial kernel (here $T_1^*$ is the dual operator, not the
Hilbert space adjoint). The latter case is not possible since the
dual of any hypercyclic operator has empty point spectrum. Thus
$\sigma(T_1)\subseteq \sigma(S_1)$. One the other hand, $S_1$ has
purely continuous spectrum and $T_1$ is a compact perturbation of
$S_1$. Hence $\sigma(S_1)\subseteq \sigma(T_1)$. Thus
$\sigma(T_1)=K\setminus A$. Let $T=T_0\oplus T_1$. Then
$\sigma(T)=\sigma(T_0)\cup\sigma(T_1)=\overline{A}\cup (K\setminus
A)=K$. Moreover $T$ is hypercyclic as a direct sum of a hypercyclic
operator with a mixing operator. \end{proof}

\smallskip

{\bf Question 4.} \ What compact subsets of $\C$ are the spectra of
frequently hypercyclic operators on $\ell_2$? What about
$\uu$-frequently hypercyclic operators?

\smallskip

{\bf Acknowledgements.} \ The author would like to thank the referee
for helpful comments and corrections.

\small\rm

\vskip1truecm

\scshape

\noindent Stanislav Shkarin

\noindent Queens's University Belfast

\noindent Department of Pure Mathematics

\noindent University road, Belfast, BT7 1NN, UK

\noindent E-mail address: \qquad {\tt s.shkarin@qub.ac.uk}

\end{document}